\documentclass[12pt, a4paper]{article}
\usepackage{amsfonts}
\usepackage{mathrsfs}
\usepackage{tikz-cd}
\usepackage{latexsym}
\usepackage{longtable}
\usepackage{hyperref}
\usepackage{xy}
\usepackage{amsfonts,amsmath,amssymb,amsthm}
\usepackage{color}
\usepackage{bitset}

\xyoption{all}

\newcommand{\bcen}{\begin{center}}     \newcommand{\ecen}{\end{center}}
\newcommand{\bay}{\begin{array}}      \newcommand{\eay}{\end{array}}
\newcommand{\beq}{\begin{eqnarray*}}      \newcommand{\eeq}{\end{eqnarray*}}

\def\sup{\mathrm{sup}}
\def\inf{\mathrm{inf}}

\def\lim{\mathrm{lim}}

\def\Cone{\mathrm{Cone}}

\begin{document}
	
	\newtheorem{theorem}{Theorem}[section]
	\newtheorem{proposition}[theorem]{Proposition}
	\newtheorem{lemma}[theorem]{Lemma}
	\newtheorem{corollary}[theorem]{Corollary}
	\newtheorem{remark}[theorem]{Remark}
	\newtheorem{example}[theorem]{Example}
	\newtheorem{definition}[theorem]{Definition}
	\newtheorem{question}[theorem]{Question}
	\numberwithin{equation}{section}

	\title{\large\bf
 Quasi-projective dimension and Gorenstein projective dimension}
	
\author{\large Yongyun Qin$^{1,*}$ and Lanlan Yang$^1$}
\date{\footnotesize 1. School of Mathematics, Yunnan Key Laboratory of Modern Analytical Mathematics and Applications,
	Yunnan Normal University, Kunming, Yunnan 650500, China.
	\\ E-mail: qinyongyun2006@126.com; llyang2025@126.com
}
	
	\maketitle
	
	\begin{abstract} Gheibi, Jorgensen and Takahashi recently introduced the quasi-projective dimension, a homological invariant that extends the classical projective dimension. In this paper, we investigate this invariant from the perspective of Gorenstein homological algebra. First, we show that the quasi-projective dimension coincides with the Gorenstein projective dimension under certain conditions. Second, we introduce and study the quasi-Gorenstein projective dimension as a Gorenstein analogue of the quasi-projective dimension.

\end{abstract}
	\medskip
	
	{\footnotesize {\bf MSC2020}: 16E10; 16G10; 18G20; 18G25.}
	
	\medskip
	
	{\footnotesize {\bf Keywords}: Quasi-projective dimension; Gorenstein projective dimension; Quasi-Gorenstein projective dimension. }
	
	\bigskip
	\section{\large Introduction}
	
	\indent\indent Projective and injective dimensions are fundamental invariants in classical homological algebra$.$ Recently, the concepts of quasi-projective and quasi-injective dimensions were introduced by Gheibi-Jorgensen-Takahashi \cite{GJT21} and Gheibi \cite{Gh24}, respectively. These quasi-homological dimensions not only generalize the classical homological dimensions, but also play a pivotal role in the Auslander-Buchsbaum formula and the depth formula \cite{CHY26, DFG, FL26, GJT21, Gh24, JMM26, JMM24}. Moreover, prominent conjectures such as the finitistic dimension conjecture, the Auslander-Reiten conjecture and Tachikawa's second conjecture are known to hold for Artin algebras whose finitely generated modules have finite quasi-projective dimension \cite{CCL26,GJT21}. This underscores the significance of determining the quasi-projective dimension.
	
	Recently, Chen-Chen-Liu developed some methods to compute the quasi-projective dimension \cite{CCL26}. Nevertheless, the computation remains intricate in general, even for a specific class of Nakayama algebras.
	In this paper, we establish a connection between
	the quasi-projective dimension $\operatorname{qpd}_{\mathcal{A}}M$ and the Gorenstein projective dimension $\mathrm{Gpd}_\mathcal{A} M$, where the latter is more readily computable.
	For the definitions of $\operatorname{qpd}_{\mathcal{A}}M$, we refer to Definition~\ref{def-quasi-proj}.
	
	\begin{theorem}  {\rm (Theorem~\ref{periodic})} \label{1.0} {\rm Let \(\mathcal{A}\) be an abelian category with enough projectives. If every Gorenstein projective object in $\mathcal{A}$ is periodic, then for any  $M\in\mathcal{A}$ with $\mathrm{Gpd}_\mathcal{A} M < \infty$, we have \(\mathrm{qpd}_\mathcal{A}M = \mathrm{Gpd}_\mathcal{A} M\).} 
	\end{theorem}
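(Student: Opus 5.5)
The plan is to prove the two inequalities $\mathrm{Gpd}_\mathcal{A}M\le \mathrm{qpd}_\mathcal{A}M$ and $\mathrm{qpd}_\mathcal{A}M\le\mathrm{Gpd}_\mathcal{A}M$ separately, assuming $M\neq 0$ and setting $n=\mathrm{Gpd}_\mathcal{A}M<\infty$. I recall the definition used in Definition~\ref{def-quasi-proj}: a quasi-projective resolution of $M$ is a bounded complex $P$ of projectives with each $H_i(P)$ a finite direct sum of copies of $M$, not all zero, and $\mathrm{qpd}_\mathcal{A}M=\inf\{\sup P-\mathrm{hsup}\,P\}$.

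\emph{The inequality $\mathrm{Gpd}\le\mathrm{qpd}$.} I would use the Ext characterization of Gorenstein projective dimension: when $\mathrm{Gpd}_\mathcal{A}M<\infty$, it equals $\sup\{i:\operatorname{Ext}^i_\mathcal{A}(M,Q)\neq0 \text{ for some projective } Q\}$. Let $P$ be a quasi-projective resolution with $s=\sup P$, $h=\mathrm{hsup}\,P$ and $d=s-h$. I would filter $P$ by its homology via the good truncations and apply $\operatorname{Hom}_\mathcal{A}(-,Q)$. This gives a long exact sequence / spectral sequence argument in the spirit of Gheibi--Jorgensen--Takahashi, with $E_2$ terms $\operatorname{Ext}^{p}(H_q(P),Q)$ converging to $H^{p+q}\operatorname{Hom}(P,Q)$. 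The abutment vanishes in degrees $>s$ because $P$ is concentrated in degrees $\le s$. Take $t$ maximal with $\operatorname{Ext}^t(M,Q)\neq0$; by the characterization above, $t=n$ is finite. The corner term $\operatorname{Ext}^t(H_h(P),Q)$, in total degree $t+h$, then survives to $E_\infty$ because every differential into or out of it lands in a zero term. Hence $t+h\le s$, so $n\le d$.

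\emph{The inequality $\mathrm{qpd}\le\mathrm{Gpd}$.} Here periodicity is used. Take an exact sequence $0\to G\to P_{n-1}\to\cdots\to P_0\to M\to 0$ with the $P_i$ projective and $G$ Gorenstein projective. Periodicity gives an exact sequence $0\to G\to Q_{m-1}\to\cdots\to Q_0\to G\to 0$ with the $Q_j$ projective. I then want a bounded complex of projectives whose homologies are all sums of copies of $M$ and with $\sup-\mathrm{hsup}=n$.

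My first attempt is to splice. The complex $Q_{m-1}\to\cdots\to Q_0\to P_{n-1}\to\cdots\to P_0$, obtained by gluing through $Q_0\twoheadrightarrow G\hookrightarrow P_{n-1}$, has $H_0=M$ and $H_{n+m-1}=G$. The unwanted homology $G$ must be replaced by $M$. For this I would glue on a second copy of the length-$n$ resolution of $M$, placed so that its $G$-end is identified with the kernel at the top. Concretely, the cone of a suitable chain map between the truncated resolution and its shift, built from the isomorphism $\Omega^{n+m}M\cong\Omega^nM$ given by periodicity, should be a complex of projectives with $H_0=M$ and $H_{n+m}\cong M$, and top degree $2n+m$. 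This would give $\sup-\mathrm{hsup}=n$, so $\mathrm{qpd}_\mathcal{A}M\le n$.

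The main obstacle is exactly this construction: lifting the periodicity isomorphism to a chain map whose cone has \emph{all} homology equal to copies of $M$, with no stray $G$ or other terms. If the direct cone approach leaves extra homology, I would instead take a direct sum of shifted copies of the spliced complex and use the periodic identification to cancel the $G$-homologies in pairs.
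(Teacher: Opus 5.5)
Your argument for $\mathrm{Gpd}_\mathcal{A}M\le\mathrm{qpd}_\mathcal{A}M$ is correct. Fix a projective $Q$ with $\operatorname{Ext}^n_\mathcal{A}(M,Q)\neq 0$. The corner term $E_2^{n,h}=\operatorname{Ext}^n_\mathcal{A}(H_h(P),Q)$ only receives differentials from terms with $q>h$, which are zero. It only sends differentials to terms $\operatorname{Ext}^{>n}_\mathcal{A}(H_q(P),Q)$ with $H_q(P)\in\operatorname{add}M$, which also vanish. Since the homology is bounded, you could even replace the spectral sequence by a finite induction over good truncations. This is a tidy alternative to the paper's dimension shifting along the short exact sequences attached to $N=\operatorname{Coker}d_1$, and like that argument it uses no periodicity.

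The genuine gap is the reverse inequality, the only place periodicity matters, which you leave at ``a suitable chain map''. The shape you guess, a cone of a chain map from a truncated resolution to a shift of it, is the right one. But the isomorphism $\Omega^{n+m}M\cong\Omega^nM$ alone does not produce that chain map, and eventual periodicity alone does not even bound $\mathrm{qpd}$. In Example~\ref{quiver1} one has $\Omega^2S_1\cong S_2\cong\Omega^1S_1$, yet $\mathrm{qpd}_AS_1=\infty$.

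The missing idea has two parts: use that $G=\Omega^nM$ is Gorenstein projective, and enlarge the period. Gorenstein projectivity gives $\operatorname{Ext}^{\geq1}_\mathcal{A}(\Omega^jM,P)=0$ for all $j\geq n$ and all projective $P$. Concretely:
\begin{itemize}
\item Let $R_\bullet$ be the resolution of $M$ that is $P_{n-1}\to\cdots\to P_0$ in degrees $<n$ and the periodic resolution of $G$ above.
\item Replace $m$ by a multiple with $m\geq\max\{2,n\}$.
\item Set $g_i=\mathrm{id}\colon R_i\to R_{i-m}$ for $i\geq n+m$ and $g_i=0$ for $i<m$.
\item Construct $g_{n+m-1},\dots,g_m$ downward. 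The obstruction to finding $g_{n+m-j}\colon R_{n+m-j}\to P_{n-j}$ lies in $\operatorname{Ext}^1_\mathcal{A}(\Omega^{n+m-j}M,P_{n-j})$. This group vanishes because $n+m-j\geq n$ whenever $j\leq n\leq m$.
\end{itemize}
In Example~\ref{quiver1}, with $n=1$ and $m=2$, the corresponding group is $\operatorname{Ext}^1_A(S_2,Ae_1)\neq0$, and indeed no such $g_2$ exists.

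Now restrict $g$ to the brutal truncations $R^{\leqslant n+m-1}_\bullet\to R^{\leqslant n-1}_\bullet[m]$ and take the cone. It has homology $M$ exactly in degrees $1$ and $m$, and these are distinct because $m\geq2$; for $m=1$ you would only get an extension of $M$ by $M$. Its top degree is $n+m$, so $\mathrm{qpd}_\mathcal{A}M\leq n$. This is the criterion \cite[Theorem 1.2 (4)]{CCL26} that the paper applies.

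Your announced degrees ($H_0$, $H_{n+m}$, top degree $2n+m$) do not match any consistent version of this cone. Your fallback also cannot work. Homology commutes with direct sums, so a direct sum of shifted spliced complexes still has copies of $G$ among its homologies. Removing them requires a new differential, which is precisely the chain map you did not construct.
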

	
Theorem~\ref{1.0} offers an effective approach to calculating the quasi-projective dimension for certain modules over algebras whose Gorenstein projective modules are periodic. Such algebras include CM-finite algebras, and in particular, monomial algebras. Specifically, as every module over a Gorenstein algebra possesses finite Gorenstein projective dimension, the quasi-projective dimension of any module over CM-finite Gorenstein algebras and Gorenstein monomial algebras is computable. Consequently, the quasi-global dimension defined in \cite{CCL26} coincides with the Gorenstein global dimension for these algebras.

	A fundamental task in Gorenstein homological algebra is to investigate the Gorenstein counterparts of the classical homological algebra, such as the Gorenstein derived categories \cite{GZ10}, Gorenstein derived functors \cite{Hol04}, Gorenstein tilting modules \cite{MY, YLO, ZPei} and Gorenstein silting modules \cite{GMZ, YY}.	
In this paper, we define the quasi-Gorenstein projective dimension as a Gorenstein analogue of the quasi-projective dimension. Recall from \cite{GJT21} that the quasi-projective dimension of an object $M$ is defined via a quasi-projective resolution, which a complex of projective objects (not necessarily acyclic) whose homologies are isomorphic to finite direct sums of copies $M$. Motivated by this, we define the quasi-Gorenstein projective dimension in Definition~\ref{quasi} via such a complex $X_\bullet$ of Gorenstein projective objects,
with the extra condition that $\operatorname{H}_j(\operatorname{Hom}_{\mathcal{A}}(G, X_\bullet)) \cong \operatorname{Hom}_{\mathcal{A}}(G, \operatorname{H}_j(X_\bullet))$ for any
Gorenstein projective object $G$. This condition is crucial, because the functor $\operatorname{Hom}_{\mathcal{A}}(G,-)$ is required to be exact on the objects of interest in Gorenstein homological algebra. As a Gorenstein analogue of \cite{CCL26, GJT21}, 
we establish some elementary properties of quasi-Gorenstein projective dimension in Section~\ref{Quasi-Gorenstein-projective dimension }.

	This paper is organized as follows. In Section \ref{Section-definitions and conventions}, we recall some definitions and conventions. In section \ref{Quasi-projective dimension and Gorenstein projective dimension}, we establishes the relation between quasi-projective dimension and Gorenstein projective dimension, and we prove Theorem~\ref{1.0}. In section \ref{Quasi-Gorenstein-projective dimension }, we introduce and study the quasi-Gorenstein projective dimension.
	
	\section{\large Preliminaries}\label{Section-definitions and conventions}
	
	\indent\indent In this section, we introduce some standard notation and recall some basic facts about Gorenstein projective objects and Gorenstein derived categories.

	Throughout $\mathcal{A}$ is an abelian category with enough projective objects. 
	Let
	\[
	X_\bullet:\ \cdots \xrightarrow{d_{i+2}} X_{i+1} \xrightarrow{d_{i+1}} X_i \xrightarrow{d_i} X_{i-1} \xrightarrow{d_{i-1}} \cdots
	\]
	be a complex over $\mathcal{A}.$ For each integer $i$, we define the $i$-th cycle $\operatorname{Z}_i(X_\bullet) = \operatorname {Ker}d_i$ , the $i$-th boundary $\operatorname{B}_i(X_\bullet) = \operatorname{Im}d_{i+1}$, and the \(i\)-th homology $\operatorname{H}_i(X_\bullet) = \operatorname{Z}_i(X_\bullet) / \operatorname{B}_i(X_\bullet).$ For a fixed integer $n$, we denote by $X_\bullet[n]$ the complex obtained from $X_\bullet$ by shifting $n$ degrees, that is, $(X_\bullet[n])_i=X_{i-n}$. Moreover, the \textit{supremum}, \textit{infimum}, \textit{homological supremum} and \textit{homological infimum} of \(X_\bullet\) are defined by
	\[
	\operatorname{sup} X_\bullet := \operatorname{sup}\{i \in \mathbb{Z} \mid X_i \neq 0\}, \quad \operatorname{inf} X_\bullet := \inf\{i \in \mathbb{Z} \mid X_i \neq 0\},
	\]
	\[
	\operatorname{hsup} X_\bullet := \operatorname{sup}\{i \in \mathbb{Z} \mid \operatorname{H}_i(X_\bullet) \neq 0\}, \quad \operatorname{hinf} X_\bullet := \operatorname{inf}\{i \in \mathbb{Z} \mid \operatorname{H}_i(X_\bullet) \neq 0\}.
	\]
	Clearly $\inf X_\bullet \leqslant \operatorname{hinf} X_\bullet \leqslant \operatorname{hsup} X_\bullet \leqslant \sup X_\bullet.$ We say that $X_\bullet$ is \textit{bounded} (respectively, \textit{bounded below}) if $\sup X_\bullet < \infty$ and $\inf X_\bullet > -\infty$ (respectively, $\inf X_\bullet > -\infty$)$.$ 
	
	Let $\mathcal{C}(\mathcal{A})$ be the category of all complexes over $\mathcal{A}$ with chain maps, and let $\mathcal{C}^-(\mathcal{A})$ and $\mathcal{C}^b(\mathcal{A})$ the full subcategories of bounded below and bounded complexes, respectively. The corresponding homotopy categories are denoted by $\mathcal{K}(\mathcal{A})$, $\mathcal{K}^-(\mathcal{A})$ and $\mathcal{K}^b(\mathcal{A})$.
	
	Let $\mathcal{P}(\mathcal{A})$ be the class of projective objects of $\mathcal{A}$.  From \cite{EJ00}, an object \(G\) of \(\mathcal{A}\) is \textit{Gorenstein projective} if there is an exact sequence \(\dots \to P_1 \to P_0 \to P_{-1} \to P_{-2} \to \dots\) of projective objects of \(\mathcal{A}\), which stays exact after applying $\text{Hom}_{\mathcal{A}}(-,P)$ for each $P \in \mathcal{P(A)}$, such that $G \cong \text{Im}(P_0 \to P_{-1})$. Let \(\mathcal{GP(A)}\), or simply $\mathcal{GP}$, be the full subcategory of Gorenstein projective objects. For a finite dimensional algebra $A$ over a field $k$, we write  $\mathcal{P}(A)$ (resp. $\mathcal{GP}(A)$) for $\mathcal{P}(A\text{-mod})$ (resp. $\mathcal{GP}(A\text{-mod})$), where $A\text{-mod}$ is the category of finite generated left $A$-modules. 
	 
	 The \textit{Gorenstein projective dimension} $\operatorname{Gpd}_\mathcal{A}M$ of an object $M$ is defined to be the smallest integer $n\geqslant 0$ such that there is an exact sequence 
	 $0 \longrightarrow G_n \longrightarrow \cdots \longrightarrow G_0 \longrightarrow M \longrightarrow 0$
	 with $G_i \in \mathcal{GP(A)}$ for any $i$, if it exists; and \(\mathrm{Gpd} _\mathcal{A}M = \infty\) if there is no such exact sequence of finite length; see \cite{Hol}.
	  From \cite{BM}, the {\it Gorenstein global dimension} of $\mathcal{A}$ is defined as $$\mathrm{G. gldim}(\mathcal{A}) := \sup \{\operatorname{Gpd}_\mathcal{A}M \mid M \in \mathcal{A} \}.$$	  
	 
	  An Artin algebra $A$ is said to be a {\it Gorenstein algebra} provided the injective dimension of $_AA$ as well as of $A_A$ is finite. It is know that every module over a  Gorenstein algebra has finite Gorenstein projective dimension.  
	   
	  Following \cite{AM02}, we define a sequence to be {\it proper exact} if it remains exact under the functor $\operatorname{Hom}_{\mathcal{A}}(G, -)$ for any
	 $G\in \mathcal{GP(A)}$.  Since $\mathcal{P}(\mathcal{A})\subseteq
	 \mathcal{GP(A)}$, all the augmented proper exact sequence occurring in this paper will be exact. A \textit{proper Gorenstein projective resolution} of an object $M$ is a proper exact sequence $ \dots \to G_1\to G_0\to M\to 0$ with $G_i \in \mathcal{GP(A)}$ for any integer $i$. According to \cite{Hol}, any object with finite Gorenstein projective dimension admits a proper Gorenstein projective resolution of finite length. 
	
	 From \cite{GZ10}, the \textit{Gorenstein derived category} $\mathcal D_{\mathrm{gp}}^\ast(\mathcal{A})$  with \(\ast \in \{ \text{blank}, - ,b \}\) is defined as the Verdier quotient of the homotopy category $\mathcal{K}^\ast(\mathcal{A})$ modulo the thick triangulated subcategory $\mathcal{K}_\mathrm{gpac}^\ast(\mathcal{A})$ of proper exact
	 complexes (also known as $\mathcal{GP}$-acyclic complexes). A chain map $f_\bullet\colon X_\bullet \to Y_\bullet$ is called a {\it $\mathcal{GP}$-quasi-isomorphism} if it induces a quasi-isomorphism $\operatorname{Hom}_{\mathcal{A}}(G, f_\bullet)$ for each $G\in \mathcal{GP(A)}$; see \cite{GZ10}. It is known that the $\mathcal{GP}$-quasi-isomorphisms become isomorphisms in $\mathcal D_{\mathrm{gp}}^\ast(\mathcal{A})$.

\section{Quasi-projective dimension and Gorenstein projective dimension}\label{Quasi-projective dimension and Gorenstein projective dimension}

In this section, we recall the definition of quasi-projective dimension, and then establish its relation with Gorenstein projective dimension.
The following definition is taken from {\rm\cite[Definition 3.1]{GJT21}}. 
 
\begin{definition}\label{def-quasi-proj}
	
		{\rm(1) A \textit{quasi-projective resolution} of an object \(M\) in a category \(\mathcal{A}\) is defined as a bounded below complex \(P_\bullet\)
		of projective objects such that for all \(i \geq \inf P_\bullet\), there exist nonnegative integers \(a_i\), not all zero, such that \(H_i(P_\bullet) \cong M^{\oplus a_i}\). We say that the quasi-projective resolution \(P_\bullet\) is \textit{finite} if \(P_\bullet\) has finite length, equivalently, \(\sup P_\bullet < \infty\).
		
		(2) The \textit{quasi-projective dimension} of \(M\) is defined by
		$\operatorname{qpd}_{\mathcal{A}} M := 
			\inf\{\sup P_\bullet
			\linebreak
			  - \operatorname{hsup} P_\bullet \mid P_\bullet \text{ is a finite quasi-projective resolution of } M\}$, and 
			$\operatorname{qpd}_{\mathcal{A}} M =\infty$ if $M$ does not have a finite quasi-projective
			resolution.
		}
\end{definition}

Since each deleted projective resolution of $M$ is automatically a 
quasi-projective resolution, it is clear that $\operatorname{qpd}_{\mathcal{A}} M \leqslant \operatorname{pd}_{\mathcal{A}} M$, and it was proved in \cite[Proposition 3.1]{CCL26} that if $\operatorname{pd}_{\mathcal{A}} M<\infty$, then $\operatorname{qpd}_{\mathcal{A}} M = \operatorname{pd}_{\mathcal{A}} M$. However, for a
module $M$ with $\operatorname{pd}_{\mathcal{A}} M=\infty$, determining $\operatorname{qpd}_{\mathcal{A}} M$ is often difficult. The following theorem shows that, under certain conditions, the quasi-projective dimension can be characterized in terms of the Gorenstein projective dimension.

	 \begin{theorem}\label{periodic}
	 	Let \(\mathcal{A}\) be an abelian category such that every Gorenstein projective object in $\mathcal{A}$ is periodic. Then for any  $M\in\mathcal{A}$ with $\mathrm{Gpd}_\mathcal{A} M < \infty$, we have \(\mathrm{qpd}_\mathcal{A}M = \mathrm{Gpd}_\mathcal{A} M\).
	 \end{theorem}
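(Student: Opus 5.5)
We prove the two inequalities $\operatorname{qpd}_{\mathcal{A}}M\leqslant \operatorname{Gpd}_{\mathcal{A}}M$ and $\operatorname{Gpd}_{\mathcal{A}}M\leqslant \operatorname{qpd}_{\mathcal{A}}M$ separately. Here ``$G$ periodic'' means there is an exact sequence $0\to G\to P_{p-1}\to\cdots\to P_0\to G\to 0$ with $P_i$ projective and $p\geqslant 1$; equivalently $\Omega^pG\cong G$.

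\emph{Step 1: $\operatorname{qpd}M\leqslant \operatorname{Gpd}M=:n$.} If $M$ is projective there is nothing to prove, so assume $M$ is not projective. Take a projective resolution $\cdots\to P_1\to P_0\to M\to 0$. Since $\operatorname{Gpd}M=n$, the syzygy $G:=\Omega^nM$ is Gorenstein projective (standard, cf.\ \cite{Hol}). By hypothesis $G$ is periodic, so there is an exact sequence $0\to G\to Q_{p-1}\to\cdots\to Q_0\to G\to 0$.

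Now form the bounded complex
\[
Q_{p-1}\to\cdots\to Q_0\to P_{n-1}\to\cdots\to P_0,
\]
where $P_0$ sits in degree $0$, $Q_0$ in degree $n$, and $Q_{p-1}$ in degree $n+p-1$. The connecting map $Q_0\to P_{n-1}$ is $Q_0\twoheadrightarrow G=\Omega^nM\hookrightarrow P_{n-1}$. Splicing gives exactness in degrees $1,\dots,n+p-2$ and $H_0=M$. The top homology is $H_{n+p-1}=\ker(Q_{p-1}\to Q_{p-2})\cong G$, which is not $M$.

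To fix this, we splice differently: the homology at the top should be $M$. Use instead the sequence $0\to \Omega^nM\to P_{n-1}\to\cdots\to P_0\to M\to 0$ together with periodicity to build a complex whose top kernel is $\Omega^n M$ lifted back to $M$. Concretely, take the complex $X$ obtained by concatenating two copies of the truncated resolution $P_{n-1}\to\cdots\to P_0$, linked through the periodic piece:
\[
P_{n-1}\to\cdots\to P_0 \xrightarrow{\ 0\ } Q_{p-1}\to\cdots\to Q_0\to P_{n-1}\to\cdots\to P_0 .
\]
The first $P_0$ must map to $Q_{p-1}$ via $P_0\to M\to ?$, which is not available, so instead I would take the simpler route:
\[
P_\bullet:\quad 0\to P_{n-1}\oplus Q\to\cdots,
\]
Honestly, the cleanest choice is the direct-sum trick. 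A finite complex with $H_0=M$, exact in the middle, and top kernel $G$ can be adjusted by adding a shifted copy of a complex whose homology cancels $G$. Periodicity lets the complex be continued \emph{upward} indefinitely while remaining exact, so the only real obstruction is how to stop. The plan is to stop at a length where the top term has homology $M$, i.e.\ to use $\Omega^{n}M$-periodic splicing so that the top cycle is $M$ itself. That requires an exact sequence $0\to M\to(\text{projectives})\to\cdots$. This is unavailable in general, so instead I would aim for $\sup P-\operatorname{hsup}P=n$ by placing the nonexact homology $M$ at degree $\operatorname{hsup}$ and only projectives, exactly $n$ of them, above it. That means building a complex that is exact in low degrees (via the periodic $Q$'s), has $M$ as homology at degree $n+p-1$, say, and has $P_0,\dots,P_{n-1}$ on top. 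This is achieved by the complex
\[
P_{n-1}\to\cdots\to P_0\xrightarrow{\ \alpha\ } Q_{p-1}\to\cdots\to Q_0 ,
\]
read with $P_{n-1}$ at the top, together with a second copy of $M$ appearing at the bottom. The map $\alpha$ is built from $P_0\to M$ and a map $M\to$ ? which again needs $M$ to embed.

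This shows the real \textbf{main obstacle} is Step 1: producing homology exactly isomorphic to sums of $M$. What I would try first is to use Gorenstein projective dimension $n$ to get a proper sequence $0\to M\to H\to G'\to 0$ with $\operatorname{pd}H=n$ and $G'\in\mathcal{GP}$ (the Holm approximation \cite{Hol}). Periodicity of $G'$ then gives an exact sequence $0\to G'\to Q_{p-1}\to\cdots\to Q_0\to G'\to 0$. Splicing a length-$n$ projective resolution of $H$ with these pieces should give a bounded complex whose nonzero homologies are $M$ in two degrees, with exactly $n$ terms above the top homology. Combining this with the bound $\operatorname{qpd}\geqslant$ below then yields equality.

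\emph{Step 2: $\operatorname{Gpd}M\leqslant\operatorname{qpd}M$.} Take a finite quasi-projective resolution $P_\bullet$ with $s=\sup P$ and $h=\operatorname{hsup}P$. Consider the truncation at $h$:
\[
0\to P_s\to\cdots\to P_{h+1}\to Z_h\to H_h\to 0 .
\]
This shows that $Z_h$ has $\operatorname{pd}\leqslant s-h-1$ relative to $B_h$; more precisely $\operatorname{pd}B_h\leqslant s-h-1$. Then $0\to B_h\to Z_h\to M^{a}\to 0$. Descending induction on the lower part of the complex, using that $\operatorname{Gpd}M<\infty$ and that Gpd is detected by the vanishing of $\operatorname{Ext}^{>k}(M,\mathcal P)$, should yield $\operatorname{Ext}^{i}(M,P)=0$ for $i>s-h$ and all projective $P$. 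This is the approach of \cite[Prop.~3.1]{CCL26} with Ext into projectives replacing Ext into all objects. Hence $\operatorname{Gpd}M\leqslant s-h$.
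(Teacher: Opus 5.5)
Your Step 2 is sound and is the same argument the paper uses for the lower bound. The paper phrases it by contradiction: if $\operatorname{qpd}_{\mathcal{A}}M=r<d=\operatorname{Gpd}_{\mathcal{A}}M$, choose a projective $Q$ with $\operatorname{Ext}^d_{\mathcal{A}}(M,Q)\neq 0$ by Holm's Theorem 2.20, and push $\operatorname{Ext}^d_{\mathcal{A}}(M^{\oplus a_0},Q)$ down the complex to $0$, using $a_0\neq 0$.

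\textbf{The gap is in Step 1.} The upper bound $\operatorname{qpd}_{\mathcal{A}}M\leqslant\operatorname{Gpd}_{\mathcal{A}}M$ is the substantive half, and you never produce a finite quasi-projective resolution. Each splicing you try either leaves a homology group isomorphic to a Gorenstein projective module rather than to a sum of copies of $M$, or needs a map out of $M$ into projectives that need not exist.

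Your final suggestion has the same defect. Splice a length-$n$ resolution $R_\bullet\to H$ with the periodic sequence via $R_0\twoheadrightarrow H\twoheadrightarrow G'\hookrightarrow Q_{p-1}$. The homology at $R_0$ is $\operatorname{Ker}(H\to G')\cong M$, as you want. But the homology at the bottom is $\operatorname{Coker}(Q_1\to Q_0)\cong G'$, and nothing spliced below $Q_0$ turns $G'$ into $M$. So ``should give a bounded complex whose homologies are $M$ in two degrees'' is unsupported.

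\textbf{The missing idea is to take a mapping cone instead of splicing.} Your very first complex
$T=(Q_{p-1}\to\cdots\to Q_0\to P_{n-1}\to\cdots\to P_0)$,
whose top homology is $\Omega^{n+p}M\cong\Omega^nM$, is exactly where the paper starts.

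Replace $p$ by a multiple so that $p\geqslant\max\{2,n\}$. Since $\operatorname{Ext}^{i}_{\mathcal{A}}(\Omega^nM,P)=0$ for all $i>0$ and $P$ projective, you can build inductively maps $Q_{p-1}\to P_{n-1},\dots,Q_{p-n}\to P_0$. Together with the identity on the periodic tail, these form a chain map from the deleted projective resolution of $M$ to its $p$-th shift, inducing $\Omega^{n+p}M\cong\Omega^nM$. The condition $p\geqslant n$ is what guarantees only $\operatorname{Ext}^{\geqslant 1}$ is needed.

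The cone of the restricted map $T\to(P_{n-1}\to\cdots\to P_0)[p]$ is, in the derived category, the cone of the induced morphism $M\to M[p]$. So it has homology $M$ exactly in degrees $1$ and $p$; this is where $p\geqslant 2$ is needed. It lives in degrees $\leqslant n+p$, so $\sup-\operatorname{hsup}\leqslant n$. This is \cite[Theorem 1.2 (4)]{CCL26}, which the paper invokes.

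Your Holm-hull route could be rescued, but only with an analogous lifting step plus a separate Ext computation bounding the length of the cone. The lifting would extend $G'[-1]\to G'[p-1]\to M[p]$ along $G'[-1]\to M$, which requires $\operatorname{Ext}^{p+1}_{\mathcal{A}}(H,M)=0$, i.e.\ $p\geqslant n$. Splicing alone cannot do it.
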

	 \begin{proof}
	 	If \(\mathrm{Gpd}_\mathcal{A} M = 0\), then \(M \in \mathcal{GP(A)}\), and hence 
	 	$M$ is periodic by hypothesis. Now it follows from {\rm\cite[Proposition 3.6 (2)]{GJT21}} that \(\mathrm{qpd}_\mathcal{A} M = 0\).
	 	Assume that $\mathrm{Gpd}_\mathcal{A} M = d > 0.$ Then $\Omega^d M \in \mathcal{GP(A)}$, and hence there exists a positive integer $n$ such that $\Omega^n(\Omega^d M) \cong \Omega^d M.$  	
	Since $\Omega^{kn}(\Omega^d M) \cong \Omega^d M$ for any positive integer $k$, we may choose some $n \geq \mathrm{max}\{2,d\}$ such that $\Omega^{n+d} M \cong \Omega^d M$. Now we will apply {\rm\cite[Theorem 1.2 (4)]{CCL26}} to show \(\mathrm{qpd}_\mathcal{A} M \leqslant d\). 
	Consider the following chain map from
	the deleted projective resolution of \(M\) to its $n$-th shift:
	 	\[\xymatrix@C=11pt{
	 		\cdots\ar[r]^{f_1}&  Q_0\ar[r]^{f_n} \ar@{=}[d]& Q_{n-1}\ar[r]^{f_{n-1}} \ar@{-->}[d]^{h_{n-1}}&Q_{n-2}\ar[r] \ar@{-->}[d]^{h_{n-2}}&\cdots\ar[r]&Q_{n-d}\ar[r]\ar@{-->}[d]^{h_{n-d}}\ar[r]&\cdots\ar[r]^{f_1}&Q_0\ar[r]^{f_0}&P_{d-1}\ar[r]^{g_{d-1}}&\cdots\ar[r]^{g_1}&P_0\ar[r]&0\\
	 		\cdots\ar[r]^{f_1}& Q_0\ar[r]^{f_0}& P_{d-1}\ar[r]^{g_{d-1}}&P_{d-2}\ar[r]&\cdots\ar[r]&P_0\ar[r]&0,}
	 		\]
	 		where the morphisms \(h_{n-1}, h_{n-2}, \ldots, h_{n-d}\) exist for the following reasons.
	 		Since \(\Omega^d M \in \mathcal{GP(A)}\), it follows that 
	 		\(\mathrm{Ext}_\mathcal{A}^i(\Omega^d M, P) = 0\) for all \(P \in \mathcal{P(A)}\) and all \(i > 0\). 
	 		Since $$\cdots \to Q_1 \xrightarrow{f_1} Q_0 \xrightarrow{f_n} Q_{n-1} \xrightarrow{f_{n-1}} Q_{n-2} \to \cdots \to Q_1 \xrightarrow{f_1} Q_0 \to \Omega^d M \to 0$$
	 		is a projective resolution of \(\Omega^d M\), the following sequence
	 		\[
	 		\mathrm{Hom}_\mathcal{A}(Q_{n-1}, P_{d-1}) \xrightarrow{f_n^*} \mathrm{Hom}_\mathcal{A}(Q_0, P_{d-1}) \xrightarrow{f_1^*} \mathrm{Hom}_\mathcal{A}(Q_1, P_{d-1}) 
	 		\] is exact.
	 		Since \( f_1^*(f_0)= f_0 f_1 = 0\), we have $f_0 \in \mathrm{Ker} f_1^* = \mathrm{Im} f_n^*.$ 
	 		Hence, there exists a morphism \(h_{n-1} \in \mathrm{Hom}_\mathcal{A}(Q_{n-1}, P_{d-1})\) such that \(f_n^*(h_{n-1}) = f_0\), i.e., \(h_{n-1} f_n = f_0\). Similarly, consider the exact sequence $$\mathrm{Hom}_\mathcal{A}(Q_{n-2}, P_{d-2}) \xrightarrow{f_{n-1}^*} \mathrm{Hom}_\mathcal{A}(Q_{n-1}, P_{d-2}) \xrightarrow{f_n^*} \mathrm{Hom}_\mathcal{A}(Q_0, P_{d-2}).$$
	 		Since \(g_{d-1} h_{n-1} \in \mathrm{Hom}_\mathcal{A}(Q_{n-1}, P_{d-2})\) and 
	 		\(f_n^*(g_{d-1} h_{n-1}) = g_{d-1} h_{n-1} f_n = g_{d-1} f_0= 0\), 
	 		we have \(g_{d-1} h_{n-1} \in \mathrm{Ker} f_n^* = \mathrm{Im} f_{n-1}^*.\) 
	 		Then there exists \(h_{n-2} \in \mathrm{Hom}_\mathcal{A}(Q_{n-2}, P_{d-2})\) such that 
	 		\(g_{d-1} h_{n-1} = f_{n-1}^*(h_{n-2}) = h_{n-2} f_{n-1}.\) Proceeding inductively, we obtain morphisms \(h_{n-1}, h_{n-2}, \ldots, h_{n-d}\) making the above diagram commute.
	 		The above diagram induces an isomorphism $\Omega^{n+d}(M)\cong \mathrm{Coker}f_1\cong \Omega^d M$. Consequently, by {\rm\cite[Theorem 1.2 (4)]{CCL26}}, we obtain \(\mathrm{qpd}_\mathcal{A} M \leqslant d\). 
	 		It remains to prove that the equality is strict.
	 		
	 		Suppose, for contradiction, that \(\mathrm{qpd}_\mathcal{A} M = r < d\). By shifting, we assume that $M$ has a quasi-projective resolution of the form
	 		\[
	 		P_\bullet: 0 \to P_r \to \cdots \to P_1 \xrightarrow{d_1} P_0 \xrightarrow{d_0} P_{-1} \xrightarrow{d_{-1}} \cdots \to P_{-s} \to 0,
	 		\]
	 		where \(P_r\neq0 \), $\mathrm{hsup}(P_\bullet) = 0$, and for any $i \leqslant 0$,
	 		 there exist
	 		some integer $a_i$ (not all $a_i$ are zero) such that $\operatorname{H}_i(P_\bullet)\cong M^{\oplus a_i}$. Let $N= \operatorname {Coker}(d_1)$. Then it follows that $\operatorname{qpd}_\mathcal{A} M =r= \operatorname{pd}_\mathcal{A}N$. Following the proof of {\rm\cite[Proposition 2.2 ]{CCL26}}, we obtain the following short exact sequences:
	 		\begin{equation}\label{exact-sequence-1}
	 			0 \to M^{\oplus a_0} \to N \to \mathrm{Im}d_0 \to 0, 
	 		\end{equation}
	 		\begin{equation}\label{exact-sequence-2}
	 			0 \to \mathrm{Imd}_{i} \to \mathrm{Ker} d_{i-1} \to M^{\oplus a_{i-1}} \to 0, 
	 		\end{equation}
	 		\begin{equation}\label{exact-sequence-3}
	 			0 \to \mathrm{Ker} d_{i-1} \to P_{i-1} \to \mathrm{Im}d_{i-1} \to 0.
	 		\end{equation}
	 		Since \(\mathrm{Gpd}_\mathcal{A} M = d\), it follows from {\rm\cite[Theorem 2.20]{Hol}} that there exists some \(Q \in \mathcal{P(A)}\) such that \(\mathrm{Ext}_\mathcal{A}^d(M, Q) \neq 0\), and \(\mathrm{Ext}_\mathcal{A}^i(M, Q) = 0\) for all \(i > d\).
	 		As \(\mathrm{pd}_\mathcal{A} N = r < d\), applying \(\mathrm{Hom}_\mathcal{A}(-, Q)\) to  (\ref{exact-sequence-1}) yields
	 		$\mathrm{Ext}_\mathcal{A}^d(M^{\oplus a_0}, Q) \cong \mathrm{Ext}_\mathcal{A}^{d+1}(\mathrm{Im}\,d_0, Q).$
	 		Similarly, from (\ref{exact-sequence-2}) and (\ref{exact-sequence-3}), we obtain
	 		$\mathrm{Ext}_A^{d+1}(\mathrm{Ker} d_{-1}, Q)$  \(\cong \mathrm{Ext}_A^{d+1}(\mathrm{Im}d_0, Q)\)
	 		and  \(\mathrm{Ext}_\mathcal{A}^{d+1}(\mathrm{Ker} d_{-1}, Q) \cong \mathrm{Ext}_\mathcal{A}^{d+2}(\mathrm{Im}d_{-1}, Q)\). Consequently, we have a series of isomorphisms:
	 		\begin{center}
	 			$(\mathrm{Ext}_\mathcal{A}^d(M, Q))^{\oplus a_i}\cong\mathrm{Ext}_\mathcal{A}^d(M^{\oplus a_0}, Q) \cong \mathrm{Ext}_\mathcal{A}^{d+1}(\mathrm{Im}d_0, Q) \cong \mathrm{Ext}_\mathcal{A}^{d+2}(\mathrm{Im}d_{-1}, Q) \cong \cdots \cong \mathrm{Ext}_\mathcal{A}^{d+s+1}(\mathrm{Im}d_{-s}, Q)=0,$
	 		\end{center}
	 		where the last equality follows from the vanish of the map $d_{-s}\colon P_{-s}\to 0.$ Then we get \(\mathrm{Ext}_\mathcal{A}^d(M, Q) = 0\),  which is a contradiction. Therefore, we have \(\mathrm{qpd}_\mathcal{A} M = d=\operatorname{Gpd}_\mathcal{A}M\).
	 \end{proof}

	 \begin{remark}
	{\rm (1) If $\mathrm{Gpd}_\mathcal{A} M = \infty$,  then the equality \(\mathrm{qpd}_\mathcal{A} M = \operatorname{Gpd}_\mathcal{A}M\) does not hold. Indeed, in \cite[Example 3.7]{CCL26} (or Example~\ref{quiver1}), the simple $A$-module $S_2$ has the property that \(\mathrm{qpd}_{A} S_2 =0\) but $\operatorname{Gpd}_{A}S_2=\infty$; 
	
	(2) In Theorem~\ref{periodic}, the hypothesis that every Gorenstein projective object is periodic can not be omitted. Consider the finite-dimensional algebra 
	$R = k[x, y, u, v]/(x^2, xy, y^2, xu, yv, xv - yu, u^2, uv, v^2)$ with $k$ a field.  Then $R$ is a Gorenstein algebra, and thus every $R$-module has finite Gorenstein projective dimension. However,   
	it follows from \cite[Example 6.6]{GJT21} that there exists an $R$-module whose quasi-projective dimension is infinite.
	} 	
	 \end{remark}
	 
	Theorem~\ref{periodic} provides a method for computing the quasi-projective dimension for certain modules over algebras whose Gorenstein projective modules are periodic. Such algebras include CM-finite algebras, and in particular,
	monomial algebras. Indeed, an Artin algebra is called {\it CM-finite} if it has only finitely many isomorphism classes of indecomposable finitely generated Gorenstein projective
	modules. Since the syzygy functor $\Omega$ induces an auto-equivalence on the stable category of Gorenstein projective modules, for any CM-finite algebra 
	$A$ and any indecomposable module $M\in \mathcal{GP}(A)$, the set $\{\Omega ^iM \ |\  i \geq 0\}$ is finite. Consequently, $M$ is periodic, and so is every finitely generated Gorenstein projective $A$-module. By the results of \cite{CSZ18}, monomial algebras are always CM-finite.  
	 We now apply Theorem~\ref{periodic} to compute the quasi-projective dimension in cases where previous methods are computationally intensive.
	 
	 \begin{example}
	 	{\rm This is \cite[Example 3.10]{CCL26}.
	 		Let \(A\) be an algebra over a field \(k\) presented by the quiver
	 	\[
	 	\begin{tikzcd}
	 		1 \arrow[r, "\alpha"] 
	 		& 2 \arrow[d, "\beta"] \\
	 		4 \arrow[u, "\delta"] 
	 		& 3 \arrow[l, "\gamma"]
	 	\end{tikzcd}
	 	\]
	 	with relations: \(\delta\gamma\beta\alpha = \beta\alpha\delta\gamma = 0\), where the composition of arrows is always taken from right to left. Then \(A\) is a Gorenstein monomial algebra, and by \cite{CSZ18}, the indecomposable non-projective Gorenstein projective $A$-modules are precisely $M_1=A(\beta\alpha)$ and $ M_2=A(\delta\gamma).$
	 	Note that $M_1$ and $M_2$ are determined by their composition sequences:
	 	\[
	 	M_1 = \begin{array}{c} 3 \\ 4\end{array}, \ \ \ \ 	M_2 = \begin{array}{c} 1 \\ 2\end{array}.
	 	\]
	 	Denote by $J$ the Jacobson radical of $A$, and by $e_i$ the 
	 	primitive, idempotent element corresponding to the vertex $i$. Let $S_i$ be the simple $A$-module corresponding to $i$. It can be checked that $\mathrm{pd}_A(S_1)=\infty$.
	 	However, from the exact sequence
	 \[
	 \begin{tikzcd}[column sep=1em, row sep=0.2em]
	 	0 \arrow[r] 
	 	& M_2 \arrow[r] 
	 	& Ae_2 \arrow[rr] \arrow[rd, twoheadrightarrow] 
	 	&& Ae_1 \arrow[r] 
	 	& S_1 \arrow[r] 
	 	& 0, \\
	 	&&& \begin{array}{c} \small 2\\ \small 3\\ \small 4 \end{array} \arrow[ru, hook]
	 \end{tikzcd}
	 \]
	 	we deduce that $\mathrm{Gpd}_A(S_1)=2$, and hence $\mathrm{qpd}_A(S_1)=2$ by Theorem~\ref{periodic}. Similarly, the quasi-projective dimensions of all other indecomposable non-projective $A$-modules can be computed from their Gorenstein projective dimensions, which take values in $\{1, 2\}$.
	 	}
	 \end{example}
	 
	 	Next, we apply Theorem~\ref{periodic} to determine the quasi-projective dimension of certain modules over a non-Nakayama algebra.
	 \begin{example}
	 	{\rm Let \(A\) be an algebra over a field $k$ presented by the quiver
	 		\[
	 		\begin{tikzcd}[column sep=3pc]
	 			1 \arrow[r, bend left=25, "\alpha_1"] 
	 			& 2 \arrow[r, bend left=25, "\alpha_2"] \arrow[l, bend left=25, "\beta_1"] 
	 			& 3 \arrow[l, bend left=25, "\beta_2"] 
	 			& 4 \arrow[l, "\gamma"']
	 		\end{tikzcd}
	 		\]
	 		with relations: $\beta_1\alpha_1 = \alpha_1\beta_1=\beta_2\alpha_2 = \alpha_2\beta_2 = \beta_2\gamma=0$. Then \(A\) is a monomial algebra, and by \cite{CSZ18}, all indecomposable non-projective Gorenstein projective modules are $S_1$ and $Ae_3/Je_1=\begin{matrix} 2 \\ 3 \end{matrix}$. Now let $M = Ae_3/J^2e_3= \begin{matrix} 3 \\ 2 \end{matrix}$. Then from the exact sequence
	 		\[
	 		0 \to S_1 \to P_3 \to M \to 0,
	 		\]
	 		we deduce that $\operatorname{pd}_A M = \infty$
	 		but $\operatorname{Gpd}_A M = 1$, and thus $\operatorname{qpd}_A M = 1$ by Theorem~\ref{periodic}.}
	 	
	 \end{example}
	 	 
	 From \cite{CCL26}, the {\it quasi-global dimension} of an Artin algebra $A$ is defined as
	 \[
	 \mathrm{qgldim}(A) := \sup \{\mathrm{qpd}_A(M) \mid M \in A\text{-}\mathrm{mod}\}.
	 \]
	If $A$ is a Gorenstein algebra with periodic Gorenstein projective modules, then every $A$-module has finite Gorenstein projective dimension, and thus Theorem~\ref{periodic} implies that
	\[
	\mathrm{qgldim}(A) := \sup \{\mathrm{Gpd}_A(M) \mid M \in A\text{-}\mathrm{mod}\}=\mathrm{G.gldim}(A)=\mathrm{id}(_AA),
	\] where $\mathrm{id}(_AA)$ is the injective dimension of the regular module. This provides an effective method for computing the quasi-global dimension of CM-finite Gorenstein algebras, and in particular, Gorenstein monomial algebras.
	
	\begin{example}
		{\rm Let \(A_{n,m}\) be the Nakayama algebra in \cite[Theorem 4.8]{CCL26}, where $n\geqslant 2$
			and $1 \leqslant m \leqslant n$. The resolution quiver of \(A_{n,m}\) implies that \(A_{n,m}\) is a 
			Gorenstein algebra; see \cite[Proposition 1.2]{Shen}. By analyzing the injective resolution of the indecomposable projective modules, we have 	
			 \[
		\mathrm{qgldim}(A_{n,m})= \mathrm{id}(_{A_{n,m}}{A_{n,m}})= 
		\begin{cases} 
			2 & \text{for } 1 \leqslant m < n , \\
			0     & \text{for } m = n.
		\end{cases}
		\]	This give a brief proof of \cite[Theorem 4.8]{CCL26}.
		}
	\end{example}

	\section{Quasi-Gorenstein projective dimension}\label{Quasi-Gorenstein-projective dimension }
	
\indent\indent In this section, we will introduce the definition of quasi-Gorenstein projective dimension as a Gorenstein analogue of quasi-projective dimension, and then we give some basic properties and examples.

Recall from Definition~\ref{def-quasi-proj} that the quasi-projective dimension of an object $M$
is defined via a quasi-projective resolution, which is a complex of projective
objects (not necessarily acyclic) whose homologies are isomorphic to finite direct sums of copies $M$.
Since the functor $\operatorname{Hom}_{\mathcal{A}}(G,-)$ is required to be exact on the objects of interest in Gorenstein homological algebra
for any $G\in \mathcal{GP(A)}$,
we define the quasi-Gorenstein projective dimension via such a complex of Gorenstein projective objects,
with the extra condition that its homologies are preserved under the functor $\operatorname{Hom}_{\mathcal{A}}(G,-)$.
 The following lemma provides a justification for this condition.

\begin{lemma}\label{naturally}
	Let $\mathcal{A}$ and $\mathcal{B}$ be abelian categories, and let $F \colon \mathcal{A} \to \mathcal{B}$ be a left exact functor$.$ Then, for any $X_{\bullet}\in \mathcal{C}(\mathcal{A})$ and any $i\in \mathbb{Z}$, there is a naturally induced morphism $l_i \colon \operatorname{H}_i(F X_{\bullet}) \rightarrow F(\operatorname{H}_i(X_{\bullet})).$
	Moreover, if $l_i$ is an isomorphism for some $i$, then both $F\alpha_i$  and $F\pi_i$ are epimorphisms, where $\alpha_i\colon X_i \to \operatorname{B}_{i-1}(X_{\bullet})$ and 
	$\pi_i \colon \operatorname{Z}_i(X_{\bullet}) \to \operatorname{H}_i(X_{\bullet})$ are the canonical epimorphisms.
\end{lemma}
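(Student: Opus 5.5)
Since $F$ is left exact, it preserves kernels, so $F(\operatorname{Z}_i(X_\bullet)) = \operatorname{Ker}(Fd_i) = \operatorname{Z}_i(FX_\bullet)$. More precisely, applying $F$ to $0\to \operatorname{Z}_i\to X_i\xrightarrow{d_i} X_{i-1}$ gives an exact sequence $0\to F\operatorname{Z}_i\to FX_i\to FX_{i-1}$.

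The boundaries compare in the following way. The map $d_{i+1}$ factors as $X_{i+1}\xrightarrow{\alpha_{i+1}}\operatorname{B}_i\hookrightarrow \operatorname{Z}_i$, and $F$ preserves the mono $\operatorname{B}_i\hookrightarrow\operatorname{Z}_i$. Hence $\operatorname{B}_i(FX_\bullet)=\operatorname{Im}(Fd_{i+1})$, which equals the image of $F\alpha_{i+1}$ inside $F\operatorname{B}_i\subseteq F\operatorname{Z}_i$. It is therefore contained in $\operatorname{Ker}(F\pi_i)$, because applying $F$ to $0\to\operatorname{B}_i\to\operatorname{Z}_i\xrightarrow{\pi_i}\operatorname{H}_i\to0$ gives the exact sequence $0\to F\operatorname{B}_i\to F\operatorname{Z}_i\xrightarrow{F\pi_i} F\operatorname{H}_i$. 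Consequently $F\pi_i\colon \operatorname{Z}_i(FX_\bullet)\to F\operatorname{H}_i(X_\bullet)$ kills $\operatorname{B}_i(FX_\bullet)$ and induces
\[
l_i\colon \operatorname{H}_i(FX_\bullet)\to F(\operatorname{H}_i(X_\bullet)).
\]
Naturality in $X_\bullet$ follows because every construction used is functorial in chain maps.

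Now assume $l_i$ is an isomorphism. By construction $F\pi_i = l_i\circ p$, where $p\colon \operatorname{Z}_i(FX_\bullet)\to \operatorname{H}_i(FX_\bullet)$ is the canonical epimorphism. A composite of an epimorphism followed by an isomorphism is an epimorphism, so $F\pi_i$ is epi.

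For $F\alpha_{i+1}$, injectivity of $l_i$ is what we use. Injectivity means $\operatorname{Ker}(F\pi_i)=\operatorname{B}_i(FX_\bullet)$. The left exact sequence above gives $\operatorname{Ker}(F\pi_i)=F\operatorname{B}_i$, so $\operatorname{Im}(F\alpha_{i+1})=F\operatorname{B}_i$, and $F\alpha_{i+1}\colon FX_{i+1}\to F\operatorname{B}_i$ is epi. With the indexing in the statement ($\alpha_i\colon X_i\to\operatorname{B}_{i-1}$), this says that the map onto the boundaries of the relevant degree is epi after $F$. The epi conclusion comes out for the map $\alpha_{i+1}$ into $\operatorname{B}_i$. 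If the statement's index must literally be $\alpha_i$, I would reinterpret $l_i$ with the paper's convention, or note that the same argument at degree $i-1$ needs $l_{i-1}$. I would match the index to whatever the later applications use.

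The main care point is this index bookkeeping, together with justifying that $\operatorname{Im}(Fd_{i+1})$ computed in $FX_i$ coincides with the image of $F\alpha_{i+1}$ in $F\operatorname{B}_i$. That identification uses only that $F$ preserves monomorphisms.
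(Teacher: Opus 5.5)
Your argument is correct and is essentially the paper's proof. The paper builds explicit maps $g\colon \operatorname{Im}(Fd_{i+1})\to F(\operatorname{Im}d_{i+1})$ and $\theta_i=(F\eta_i)g$, and compares the rows $0\to\operatorname{Im}(Fd_{i+1})\to F\operatorname{Z}_i(X_\bullet)\to \operatorname{H}_i(FX_\bullet)\to 0$ and $0\to F\operatorname{B}_i(X_\bullet)\to F\operatorname{Z}_i(X_\bullet)\to F\operatorname{H}_i(X_\bullet)$. It then uses the Snake Lemma to see that $g$ is invertible, which is exactly your subobject identification $\operatorname{Ker}(F\pi_i)=F\operatorname{B}_i(X_\bullet)=\operatorname{Im}(Fd_{i+1})$.

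Your index concern is justified, and you should commit to $\alpha_{i+1}$ rather than hedge. The paper's proof also only shows that $F\alpha_{i+1}\colon FX_{i+1}\to F\operatorname{B}_i(X_\bullet)$ is epi. The literal $\alpha_i$ version is false: take $F=\operatorname{Hom}_A(k,-)$ over $A=k[x]/(x^2)$, the complex $A\to k$ in degrees $1,0$, and $i=1$. Then $l_1$ is an isomorphism but $F\alpha_1=0\colon k\to k$. The later applications assume all $l_i$ are isomorphisms, so nothing downstream is affected.
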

\begin{proof}
	Let $X_\bullet :\  \dots \to X_{i+1} \xrightarrow{d_{i+1}} X_i \xrightarrow{d_i} X_{i-1} \to \dots$ be a complex. Consider the following commutative diagram
		\[
		\begin{tikzcd}[column sep=small, row sep=large]
			& X_{i+1} \arrow[rr, "d_{i+1}"] \arrow[rd, twoheadrightarrow, "\alpha_{i+1}" description] 
			&& X_i \arrow[rr, "d_i"] 
			&& X_{i-1} \\
			\operatorname{Ker} d_{i+1} \arrow[ur, hook, "\omega_{i+1}" description]
			&& \operatorname{Im} d_{i+1} 
			\arrow[rr, hook, "\eta_i"] 
			\arrow[ur, hook, "\beta_{i+1}" description]
			&& \operatorname{Ker} d_i 
			\arrow[rr, twoheadrightarrow, "\pi_i"] 
			\arrow[ul, hook, "\omega_i" description]
			&& \operatorname{H}_i(X_\bullet),
		\end{tikzcd}
		\]	
	where $d_{i+1}=\beta_{i+1}\alpha_{i+1}$ is the standard decomposition, $\omega_i=\operatorname{Ker}d_i$, and $\eta_i$ is induced by the universal property of kernels. Indeed, since $d_i \beta_{i+1} \alpha_{i+1} = d_i d_{i+1} = 0$ and $\alpha_{i+1}$ is an epimorphism,  
	we have $d_i \beta_{i+1} = 0$. Consequently, there exists a morphism $\eta_i\colon\operatorname{Im} d_{i+1}\rightarrow \operatorname{Ker} d_i$ such that $\beta_{i+1} = w_i \eta_i.$ As $\beta_{i+1}$ is a monomorphism, it follows that $\eta_i$ is also a monomorphism. Set $\pi_i=\operatorname{Coker}\eta_i$, and then we have a short exact sequence
	$0 \to \operatorname{Im} d_{i+1} \xrightarrow{\eta_i} \ker d_i \xrightarrow{\pi_i} \operatorname{H}_i(X_\bullet) \to 0.$ Applying the left exact functor $F$, we have the following commutative diagram
		\[
		\begin{tikzcd}[column sep=0.14cm, row sep=1.0cm]
			& FX_{i+1} 
			\arrow[rr, "Fd_{i+1}"] 
			\arrow[rd, "F\alpha_{i+1}" description] 
			\arrow[ddr, twoheadrightarrow, bend right=15, "\mu_{i+1}" description]
			&& FX_i 
			\arrow[rr, "Fd_i"] 
			&& FX_{i-1} \\
			F(\operatorname{Ker} d_{i+1}) 
			\arrow[ur, hook, "F\omega_{i+1}"]
			&& F(\operatorname{Im} d_{i+1}) 
			\arrow[rr, hook, "F\eta_i" description, pos=0.55] 
			\arrow[ur, hook, "F\beta_{i+1}" description]
			&& F(\operatorname{Ker} d_i) 
			\arrow[rr, "F\pi_i"] 
			\arrow[ul, hook, "F\omega_i" description]
			&& F(\operatorname{H}_i(X_\bullet)), \\
			&& \operatorname{Im}(Fd_{i+1}) 
			\arrow[u, dashed, "g"] 
			\arrow[ruu, hook, bend right=15, "\nu_{i+1}" description, near start] 
			\arrow[rru, hook, dashed, bend right=15, "\theta_i" description]
		\end{tikzcd}
		\]
	where $Fd_{i+1}=\nu_{i+1}\mu_{i+1}$ is the standard decomposition.
	Then it follows from the fact $(Fd_{i+1})(Fd_{i})=0$ that $(Fd_{i})\nu_{i+1}=0$.
	Since $F$ is left exact, we have $F(\operatorname{Ker} d_i) \cong \operatorname{Ker} (Fd_i),$ and thus $F\omega_i=\operatorname{Ker}(Fd_i).$ 
	By the universal property of kernels, there exists a morphism
	$\theta_i : \operatorname{Im}(Fd_{i+1})\rightarrow F(\ker d_i)$ such that $\nu_{i+1} = (Fw_i)\theta_i.$ Because $\nu_{i+1}$ is a monomorphism, so is $\theta_i$.
	
	On the other hand, we have $F\omega_{i+1} \cong \operatorname{Ker}(Fd_{i+1})=\operatorname{Ker}(\nu_{i+1}\mu_{i+1})=\operatorname{Ker}\mu_{i+1}$. Since $\mu_{i+1}$ is an epimorphism,
	we have $\mu_{i+1} = \operatorname{Coker}(F\omega_{i+1})$. Moreover,
	it follows from the fact $(Fd_{i+1}) (F\omega_{i+1})= 0$ that  
	$(F\alpha_{i+1})(F\omega_{i+1}) = 0$, and then
	there exists a morphism $g\colon\operatorname{Im}(Fd_{i+1})\rightarrow F(\operatorname{Im} d_{i+1})$ such that $F\alpha_{i+1} = g  \mu_{i+1}.$ 
	Therefore, we have
	$(F\beta_{i+1}) g  \mu_{i+1} = (F\beta_{i+1})(F\alpha_{i+1}) = Fd_{i+1} = \nu_{i+1} \mu_{i+1},$ which yields $(F\beta_{i+1})g = \nu_{i+1}$ 
	since $\mu_{i+1}$ is an epimorphism. It follows that $(Fw_i)\theta_i = \nu_{i+1}= (F\beta_{i+1})g = (F\omega_i)(F\eta_i)g$, which implies $\theta_i = (F\eta_i)g$ since $F\omega_i$ is a monomorphism.
	Consider the following commutative diagram with exact rows:
	\[
	\begin{tikzcd}
		0 \arrow[r] 
		& \operatorname{Im}(Fd_{i+1}) \arrow[r, "\theta_i"] \arrow[d, "g"'] 
		& F(\ker d_i) \arrow[r, "\delta_i"] \arrow[d, equals] 
		& \operatorname{H}_i(FX_\bullet) \arrow[r] \arrow[d, dashed, "l_i"'] 
		& 0 \\
		0 \arrow[r] 
		& F\operatorname{Im} d_{i+1} \arrow[r, "F\eta_i"] 
		& F(\ker d_i) \arrow[r, "F\pi_i"] 
		& F(\operatorname{H}_i(X_\bullet)),
	\end{tikzcd}
	\]
	where $\delta_i$ is the cokernel of $\theta_i$.  
	Then there exists a morphism $l_i\colon \operatorname{H}_i(F X_\bullet) \rightarrow F(\operatorname{H}_i(X_\bullet))$ such that $F\pi_i = l_i \delta_i.$
	
	If $l_i$ is an isomorphism, then $F\pi_i=l_i \delta_i$ is an epimorphism. Moreover, the Snake Lemma implies that $g$ is an isomorphism, so $F\alpha_{i+1} = g  \mu_{i+1}$
	is an epimorphism.
\end{proof}

Next, we claim that the morphism $l_i$ in Lemma~\ref{naturally} is functorial.

\begin{lemma}\label{commutate}
	Let $F\colon \mathcal{A} \to \mathcal{B}$ be a left exact functor, and let $f_\bullet\colon X_\bullet \to Y_\bullet$ be a chain map between complexes. Then there exist two natural morphisms
	 $l_i^{X_\bullet} \colon \operatorname{H}_i(FX_\bullet) \to F(\operatorname{H}_i(X_\bullet))$ and $l_i^{Y_\bullet}\colon \operatorname{H}_i(FY_\bullet) \to F(\operatorname{H}_i(Y_\bullet))$ such that
	$ F(\operatorname{H}_i(f_\bullet))\circ l_i^{X_\bullet}= l_i^{Y_\bullet} \circ \operatorname{H}_i(Ff_\bullet)$.
\end{lemma}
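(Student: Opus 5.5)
The morphisms $l_i^{X_\bullet}$ and $l_i^{Y_\bullet}$ are the ones constructed in Lemma~\ref{naturally}. It remains to check the naturality square. Recall that $l_i$ is characterized by $F\pi_i = l_i\delta_i$, where $\delta_i\colon F(\operatorname{Ker} d_i)\to \operatorname{H}_i(FX_\bullet)$ is the canonical epimorphism. Here we identify $F(\operatorname{Ker}d_i)$ with $\operatorname{Ker}(Fd_i)$, which is legitimate because $F$ is left exact. Since $\delta_i^{X}$ is an epimorphism, it suffices to prove
\[
F(\operatorname{H}_i(f_\bullet))\circ l_i^{X_\bullet}\circ\delta_i^{X}= l_i^{Y_\bullet}\circ \operatorname{H}_i(Ff_\bullet)\circ\delta_i^{X}.
\]

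First, $f_\bullet$ restricts to cycles. Since $d_i^Y f_i\omega_i^X = f_{i-1}d_i^X\omega_i^X=0$, there is a unique $z_i\colon \operatorname{Ker}d_i^X\to\operatorname{Ker}d_i^Y$ with $\omega_i^Y z_i = f_i\omega_i^X$. By definition, $\operatorname{H}_i(f_\bullet)$ is the unique morphism with $\operatorname{H}_i(f_\bullet)\pi_i^X=\pi_i^Y z_i$.

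Second, apply $F$ to the same data. The morphism $Fz_i\colon F(\operatorname{Ker}d_i^X)\to F(\operatorname{Ker}d_i^Y)$ is exactly the restriction of $Ff_i$ to the cycles of $FX_\bullet$ and $FY_\bullet$. This holds because $(F\omega_i^Y)(Fz_i)=(Ff_i)(F\omega_i^X)$ and $F\omega_i^Y$ is a monomorphism, so $Fz_i$ is the unique such map. Consequently, $\operatorname{H}_i(Ff_\bullet)$ satisfies $\operatorname{H}_i(Ff_\bullet)\delta_i^X=\delta_i^Y (Fz_i)$.

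Now compute. The left side is
\[
F(\operatorname{H}_i(f_\bullet))\, F\pi_i^X = F(\operatorname{H}_i(f_\bullet)\pi_i^X)=F(\pi_i^Yz_i)=F\pi_i^Y\,Fz_i .
\]
The right side is
\[
l_i^{Y_\bullet}\delta_i^Y Fz_i = F\pi_i^Y\, Fz_i .
\]
The two sides agree, and cancelling the epimorphism $\delta_i^X$ gives $F(\operatorname{H}_i(f_\bullet))\circ l_i^{X_\bullet}= l_i^{Y_\bullet} \circ \operatorname{H}_i(Ff_\bullet)$, as required.

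The only point needing care is the middle step: identifying $Fz_i$ with the map induced by $Ff_\bullet$ on $\operatorname{Ker}(Fd_i)$. This uses the identification $F\omega_i=\operatorname{Ker}(Fd_i)$ made in the proof of Lemma~\ref{naturally}, and the defining relation of $\delta_i$ as the cokernel of $\theta_i$, which is compatible with the analogous map for $Y_\bullet$. Everything else is cancellation against monomorphisms and epimorphisms.
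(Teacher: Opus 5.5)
Your proof is correct and follows the paper's argument. Both use the maps $l_i$ from Lemma~\ref{naturally} and the restriction $F\bar f_i$ (your $Fz_i$) of $Ff_i$ to cycles; both get commutativity of the two trapezoids from the definitions of $\operatorname{H}_i(f_\bullet)$ and $\operatorname{H}_i(Ff_\bullet)$, and then cancel the epimorphism $\delta_i^{X_\bullet}$. You also spell out two steps the paper leaves implicit: that $Fz_i$ agrees with the map induced on $\operatorname{Ker}(Fd_i)$, and the final cancellation against the epimorphism.
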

\begin{proof}
	By Lemma \ref{naturally}, there are two morphisms $l_i^{X_\bullet}\colon \operatorname{H}_i(FX_\bullet) \to F(\operatorname{H}_i(X_\bullet))$ and $l_i^{Y_\bullet} \colon \operatorname{H}_i(FY_\bullet) \to F(\operatorname{H}_i(Y_\bullet))$
	such that $F(\pi_i^{X_\bullet}) = l_i^{X_\bullet} \delta_i^{X_\bullet}$ and $F(\pi_i^{Y_\bullet}) = l_i^{Y_\bullet} \delta_i^{Y_\bullet}.$
	Consider the following diagram 
	\[
	\begin{tikzcd}
		\operatorname{H}_i(FX_\bullet) 
		\arrow[rrr, "\operatorname{H}_i(Ff_\bullet)"] 
		\arrow[dd, "l_i^{X_\bullet}"]
		&&& \operatorname{H}_i(FY_\bullet) 
		\arrow[dd, "l_i^{Y_\bullet}"] \\
		& F(\operatorname{Z}_i(X_\bullet)) 
		\arrow[r, "F\bar{f}_i"] 
		\arrow[ul, twoheadrightarrow, "\delta_i^{X_\bullet}" description] 
		\arrow[dl, "F(\pi_i^{X_\bullet})"]
		& F(\operatorname{Z}_i(Y_\bullet)) 
		\arrow[ur, twoheadrightarrow, "\delta_i^{Y_\bullet}" description] 
		\arrow[dr, "F(\pi_i^{Y_\bullet})"] \\
		F(\operatorname{H}_i(X_\bullet)) 
		\arrow[rrr, "F(\operatorname{H}_i(f_\bullet))"]
		&&& F(\operatorname{H}_i(Y_\bullet)),
	\end{tikzcd}
	\]
	where $\bar{f}_i \colon \operatorname{Z}_i(X_\bullet) \to \operatorname{Z}_i(Y_\bullet)$ is induced by $f_i\colon X_i \to Y_i.$ By the construction of
	 $\operatorname{H}_i(f_\bullet)$ and $\operatorname{H}_i(Ff_\bullet)$,
	 the part of the above diagram involving
	 $F\bar{f}_i$ commutes. This yields $F(\operatorname{H}_i(f_\bullet)) \circ l_i^{X_\bullet} = l_i^{Y_\bullet} \circ \operatorname{H}_i(Ff_\bullet)$.
\end{proof}

The morphism $l_i$
in Lemma~\ref{naturally} is referred to as the natural morphism induced by $F$.

\begin{definition} \label{quasi}{\rm(1) A complex $X_{\bullet} \in \mathcal{C}^{-}(\mathcal{A})$ is called a {\it quasi-Gorenstein projective resolution} of an object $M$ if the following three conditions hold:

(i) all $X_i$ are Gorenstein projective for $i \in \mathbb{Z}$;

(ii) for all integers $j \geqslant \inf \left(X_{\bullet}\right),$ there are integers $a_j \geqslant 0,$ not all zero, such that $\operatorname{H}_j\left(X_{\bullet}\right) \cong M^{\oplus a_j}$;

(iii) for any \(G \in \mathcal{GP}(\mathcal{A})\) and every $i \in \mathbb{Z}$, there is a natural isomorphism $ \operatorname{H}_i(\operatorname{Hom}_{\mathcal{A}}(G, X_{\bullet})) \cong \operatorname{Hom}_{\mathcal{A}}(G, \operatorname{H}_i(X_{\bullet}))$ induced by $\operatorname{Hom}_{\mathcal{A}}(G, -)$.

(2) The {\it quasi-Gorenstein projective dimension} of $M$ is defined to be
\[
\operatorname{qGpd}_{\mathcal{A}} M := \inf \left\{ \operatorname{sup}X_{\bullet} - \operatorname{hsup}X_{\bullet} \;\middle|\; \parbox{0.45\textwidth}{$X_{\bullet}$ is a finite quasi-Gorenstein projective resolution of $M$.} \right\}.
\] }
\end{definition}
We understand that $\operatorname{qGpd}_{\mathcal{A}} M =\infty$ if $M$ does not admit a finite quasi-Gorenstein projective resolution,
and  $\operatorname{qGpd}_{\mathcal{A}} M =0$ if $M=0$.
 According to {\rm\cite[Theorem~2.10]{Hol}}, if $\operatorname{Gpd}_{\mathcal{A}}M<\infty$, then $M$ admits a proper Gorenstein projective resolution of finite length. Since each deleted proper Gorenstein projective resolution is a 
 quasi-Gorenstein projective resolution, it is clear that $\operatorname{qGpd}_{\mathcal{A}}M \leqslant \operatorname{Gpd}_{\mathcal{A}}M.$
 The inequality can be strict (see Example~\ref{quiver1}). If $\mathcal{GP}(\mathcal{A}) = \mathcal{P}(\mathcal{A})$, then the quasi-Gorenstein projective dimension coincides with the quasi-projective dimension.

We now establish some fundamental properties of the quasi-Gorenstein projective dimension. The proof is analogous to those in \cite{CCL26, GJT21}, but requires additional steps to verify whether the homologies of the complexes are preserved by the functor $\operatorname{Hom}_{\mathcal{A}}(G, -)$.
The following result constitutes the Gorenstein analogue of
\cite[Proposition 3.3]{GJT21} and \cite[Proposition 3.6 (2)]{GJT21}.

\begin{proposition} \label{basis}
{\rm(1)} For an object $M \in \mathcal{A}$ and an integer $n>0$, one has that $\operatorname{qGpd}_{\mathcal{A}}\left(M^{\oplus n}\right)=\operatorname{qGpd}_{\mathcal{A}}M$.

{\rm(2)} Let $M, N \in \mathcal{A}$. Then $\operatorname{qGpd}_{\mathcal{A}}(M \oplus N) \leqslant \sup \left\{\operatorname{qGpd}_{\mathcal{A}}M, \operatorname{qGpd}_{\mathcal{A}}N\right\}$. 

{\rm(3)} For a nonzero object $M \in \mathcal{A}$ and a Gorenstein projective object $G \in \mathcal{A}$, one has that $\operatorname{qGpd}_{\mathcal{A}}(M \oplus G) \leqslant \operatorname{qGpd}_{\mathcal{A}}M$.

{\rm(4)} If $M$ is Gorenstein periodic, that is, there is a proper
Gorenstein projective resolution  $\cdots\rightarrow G_{n}\xrightarrow{d_n} G_{n-1}\rightarrow \cdots \rightarrow G_1 \xrightarrow{d_1} G_0\xrightarrow{d_0} M \rightarrow 0$ such that $\operatorname{Im}d_{r}\cong M$ for some $r\geqslant 1$,
then $\operatorname {qGpd}_\mathcal{A}M = 0$.
\end{proposition}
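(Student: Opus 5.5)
The plan is to prove all four parts by building new quasi-Gorenstein projective resolutions out of given ones: direct sums and shifts for (1)--(3), and a truncated periodic complex for (4). Throughout I would use that $\operatorname{Hom}_{\mathcal{A}}(G,-)$ is additive and that $\mathcal{GP(A)}$ is closed under finite direct sums. Consequently condition (iii) of Definition~\ref{quasi} is stable under finite direct sums of complexes: the natural maps $l_i$ of Lemma~\ref{naturally} for $X_\bullet\oplus Y_\bullet$ are the direct sums $l_i^{X_\bullet}\oplus l_i^{Y_\bullet}$, by the functoriality in Lemma~\ref{commutate} applied to the inclusions and projections.

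For (1), take a finite quasi-Gorenstein projective resolution $X_\bullet$ of $M$. The direct sum $X_\bullet^{\oplus n}$ has homology $M^{\oplus na_j}$, satisfies (iii) by the remark above, and has the same $\sup$ and $\operatorname{hsup}$; hence $\operatorname{qGpd}(M^{\oplus n})\le\operatorname{qGpd}M$. Conversely, a resolution $X_\bullet$ of $M^{\oplus n}$ has homologies $M^{\oplus na_j}$, so it is already a quasi-Gorenstein projective resolution of $M$; this gives equality.

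For (2), take resolutions $X_\bullet$ of $M$ and $Y_\bullet$ of $N$ realizing the two dimensions. After shifting, arrange $\operatorname{hsup}X_\bullet=\operatorname{hsup}Y_\bullet=0$. Then $X_\bullet\oplus Y_\bullet$ has homology $M^{a_j}\oplus N^{b_j}$, which need not be a sum of copies of $M\oplus N$. Following \cite[Proposition 3.3]{GJT21}, I would instead use $X_\bullet^{\oplus b}\oplus Y_\bullet^{\oplus a}$ together with suitable shifted copies, so that each homology becomes $(M\oplus N)^{\oplus c_j}$. Concretely, tensor-like sums $\bigoplus_{k} X_\bullet[k]^{\oplus \beta_k}\oplus\bigoplus_k Y_\bullet[k]^{\oplus\alpha_k}$ arranged so that the multiplicities match. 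Alternatively, I would mimic GJT's trick of taking the total complex of $X_\bullet\otimes$-shaped sums. The main obstacle I expect is this multiplicity bookkeeping while keeping $\sup-\operatorname{hsup}$ bounded by the maximum. I would first try the simplest version: $X_\bullet^{\oplus a_0'}\oplus Y_\bullet^{\oplus b_0'}$ with the top homologies matched, and possibly a cone-free direct sum with shifts only downward, which does not increase $\sup-\operatorname{hsup}$. Condition (iii) is automatic because everything is a direct sum.

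For (3), take a resolution $X_\bullet$ of $M$ with homologies $M^{a_j}$ and $\operatorname{hsup}=0$. Add to $X_\bullet$ the complexes $G^{\oplus a_j}$ concentrated in degree $j$, for every $j$. These have zero differential, so $\operatorname{Hom}(G',-)$ of them trivially commutes with homology, and (iii) survives. The homology becomes $(M\oplus G)^{a_j}$, while $\sup$ does not increase, since only degrees with $a_j\neq0$, all $\le 0\le\sup$, are used.

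For (4), truncate the periodic proper resolution to the complex $0\to G_{r-1}\to\cdots\to G_0\to0$, glued into a cycle of period $r$. Precisely, let $d'\colon G_0\to G_{r-1}$ be the composite $G_0\to M\cong\operatorname{Im}d_r\hookrightarrow G_{r-1}$. Then form the bounded below complex $\cdots\to G_0\xrightarrow{d'}G_{r-1}\to\cdots\to G_0\to0$, with the ends handled as in \cite[Proposition 3.6(2)]{GJT21}: a finite piece whose top differential is $d_r$ composed differently, so that the homology is $M$ at the top and bottom and zero elsewhere. Its $\sup-\operatorname{hsup}$ would then be $0$. The key check is (iii). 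Properness of the original resolution gives exactness of $\operatorname{Hom}(G,-)$ on the segments. At the spots where $M$ appears, the sequences $0\to\operatorname{Hom}(G,\operatorname{Im}d_{r})\to\operatorname{Hom}(G,G_{r-1})\to\cdots$ together with surjectivity of $\operatorname{Hom}(G,G_0)\to\operatorname{Hom}(G,M)$ show that $\operatorname{H}_i\operatorname{Hom}(G,X_\bullet)\cong\operatorname{Hom}(G,M)$ via $l_i$.
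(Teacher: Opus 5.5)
\textbf{Gap in (2).} Your arguments for (1) and (3) are correct. Your direct proof of (3), adding the stalk complexes $G^{\oplus a_j}$ in degree $j$, is the case $Y_\bullet=G$ (concentrated in degree $0$) of the construction the paper uses for (2); the paper deduces (3) from (2). Part (2) itself is where the gap is. You write down the right shape, $\bigoplus_k X_\bullet[k]^{\oplus\beta_k}\oplus\bigoplus_k Y_\bullet[k]^{\oplus\alpha_k}$, but you never say what $\beta_k,\alpha_k$ are, and you leave this bookkeeping as an open obstacle. That choice is the entire content of (2). The missing idea is to weight each resolution by the homology multiplicities of the \emph{other} one, $\beta_j=b_j$ and $\alpha_i=a_i$:
\[
F_\bullet=\Bigl(\bigoplus_{j}X_\bullet^{\oplus b_j}[j]\Bigr)\oplus\Bigl(\bigoplus_{i}Y_\bullet^{\oplus a_i}[i]\Bigr),\qquad \operatorname{H}_k(F_\bullet)\cong M^{\oplus c_k}\oplus N^{\oplus c_k},\quad c_k=\sum_{i+j=k}a_ib_j .
\]
With your normalization $\operatorname{hsup}X_\bullet=\operatorname{hsup}Y_\bullet=0$, every shift that occurs has $i,j\leqslant 0$. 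Hence $\sup F_\bullet=\max\{\sup X_\bullet,\sup Y_\bullet\}$, while $\operatorname{hsup}F_\bullet=0$ because $c_0=a_0b_0\neq 0$. Condition (iii) holds because $F_\bullet$ is a finite sum of shifts of $X_\bullet$ and $Y_\bullet$. Your fallback ``simplest version'' $X_\bullet^{\oplus a_0'}\oplus Y_\bullet^{\oplus b_0'}$, with only the top homologies matched, does not work. If $X_\bullet$ has nonzero homology in degrees $0$ and $-1$ while $Y_\bullet$ has homology only in degree $0$, then $\operatorname{H}_{-1}$ of any unshifted combination is a sum of copies of $M$ alone. That is in general not of the form $(M\oplus N)^{\oplus c}$, so you need shifted copies such as $Y_\bullet^{\oplus a_{-1}}[-1]$.

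\textbf{The case $r=1$ in (4).} The infinite glued complex is itself just a proper resolution of $M$ with $\sup=\infty$, so only a finite piece matters. For $r\geqslant 2$, the piece you name first, $0\to G_{r-1}\to\cdots\to G_0\to 0$, is exactly the paper's $G'_\bullet$. It has $\operatorname{H}_{r-1}=\operatorname{Ker}d_{r-1}=\operatorname{Im}d_r\cong M$ and $\operatorname{H}_0\cong M$, and your check of (iii) via properness and left exactness is right. For $r=1$, however, this truncation is $0\to G_0\to 0$, whose homology $G_0$ need not lie in $\operatorname{add}M$. For example, take $M=S_2$ in Example~\ref{quiver1}, with $0\to S_2\to Ae_2\to S_2\to 0$. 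There you must go once around the cycle and use $0\to G_0\xrightarrow{d'}G_0\to 0$. Its homology is $\operatorname{Ker}d'=\operatorname{Im}d_1\cong M$ in degree $1$ and $\operatorname{Coker}d'\cong M$ in degree $0$. This is what the paper's choice $s=\max\{1,r-1\}$ with $G_1=G_0$ achieves.
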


\begin{proof}
(1) If $X_\bullet$ is a quasi-Gorenstein projective resolution of $M$, then $X_\bullet^{\oplus n}$ is a quasi-Gorenstein projective resolution of $M^{\oplus n}.$ Conversely, if $Y_\bullet$ is a quasi-Gorenstein projective resolution of $M^{\oplus n}$, then it is also a quasi-Gorenstein projective resolution of $M.$ The assertion now follows.

(2) We may assume $\operatorname{qGpd}_{\mathcal{A}} M<\infty$ and $\operatorname{qGpd}_{\mathcal{A}} N<\infty.$ Let $X_\bullet$ and $X^{\prime}_\bullet$ be finite quasi-Gorenstein projective resolutions of $M$ and $N$ such that $\operatorname{qGpd}_{\mathcal{A}}M=\sup X_\bullet-\operatorname{hsup} X_\bullet$ and $\operatorname{qGpd}_{\mathcal{A}}N=\sup X_\bullet^{\prime}-\operatorname {hsup} X_\bullet^{\prime}$. Then $\mathrm{H}_i(X_\bullet) \cong M^{\oplus a_i}$ and $\mathrm{H}_j\left(X_\bullet^{\prime}\right) \cong N^{\oplus b_j}$ for some $a_i, b_j \geqslant 0$ (and all but finitely many
of the $a_i$ and $b_j$ are zero). Moreover, for any \(G \in \mathcal{GP}(\mathcal{A})\) and every $i \in \mathbb{Z}$, there are two natural isomorphisms $ \operatorname{H}_i(\operatorname{Hom}_{\mathcal{A}}(G, X_{\bullet})) \cong \operatorname{Hom}_{\mathcal{A}}(G, \operatorname{H}_i(X_{\bullet}))$ and $ \operatorname{H}_i(\operatorname{Hom}_{\mathcal{A}}(G, X^{\prime}_{\bullet})) \cong \operatorname{Hom}_{\mathcal{A}}(G, \operatorname{H}_i(X^{\prime}_{\bullet})).$
Then the complex
$$F_\bullet=(\oplus_{j \in \mathbb{Z}} X_\bullet^{\oplus b_j}[j]) \oplus\left(\oplus_{i \in \mathbb{Z}} X_\bullet^{\prime \oplus a_i}[i]\right)$$
is a quasi-Gorenstein projective resolution for $M \oplus N$; in fact, for each $k$, we have $\operatorname{H}_k(F_\bullet) = (M \oplus N)^{\oplus \sum_{i+j=k} a_i b_j}.$ Moreover,
for any \(G \in \mathcal{GP}(\mathcal{A})\), we get the natural isomorphism
$\operatorname{H}_k(\operatorname{Hom}_{\mathcal{A}}(G, F_{\bullet}))	\cong \operatorname{Hom}_{\mathcal{A}}(G, \operatorname{H}_k(F_{\bullet}))$ form the following formulas  
\begin{gather*}
	\operatorname{H}_k(\operatorname{Hom}_{\mathcal{A}}(G, (\oplus_{j \in \mathbb{Z}} X_\bullet^{\oplus b_j}[j]) \oplus (\oplus_{i \in \mathbb{Z}} X_\bullet^{\prime \oplus a_i}[i]))) \\
	\cong (\oplus_{j \in \mathbb{Z}} \operatorname{H}_{k-j}(\operatorname{Hom}_{\mathcal{A}}(G, X_\bullet))^{\oplus b_j}) \oplus (\oplus_{i \in \mathbb{Z}} \operatorname{H}_{k-i}(\operatorname{Hom}_{\mathcal{A}}(G, X'_\bullet))^{\oplus a_i}) \\
	\cong (\oplus_{j \in \mathbb{Z}} \operatorname{Hom}_{\mathcal{A}}(G, \operatorname{H}_{k-j}(X_\bullet))^{\oplus b_j}) \oplus (\oplus_{i \in \mathbb{Z}} \operatorname{Hom}_{\mathcal{A}}(G, \operatorname{H}_{k-i}(X'_\bullet))^{\oplus a_i}) \\
	\cong \operatorname{Hom}_{\mathcal{A}}(G, (\oplus_{j \in \mathbb{Z}} \operatorname{H}_{k-j}(X_\bullet)^{\oplus b_j}) \oplus (\oplus_{i \in \mathbb{Z}} \operatorname{H}_{k-i}(X'_\bullet)^{\oplus a_i})).
\end{gather*}
 Since $\operatorname{sup} F_\bullet=\max \left\{\operatorname{sup} X_\bullet+\operatorname{hsup} X_\bullet^{\prime}, \operatorname{sup} X_\bullet^{\prime}+\operatorname{hsup} X_\bullet\right\}$ and $\operatorname{hsup}
 F_\bullet=\operatorname{hsup} X_\bullet+\operatorname{hsup} X_\bullet^{\prime}$, we have that
	$\operatorname{qGpd}_{\mathcal{A}}(M \oplus N) \leqslant \operatorname{sup} F_\bullet-\operatorname{hsup}F_\bullet=\max \left\{\operatorname{sup} X_\bullet
	-\operatorname{hsup} X_\bullet, \operatorname{sup} X_\bullet^{\prime}-\operatorname{hsup} X_\bullet^{\prime}\right\}= \operatorname{sup} \left\{\operatorname{qGpd}_{\mathcal{A}} M, \operatorname{qGpd}_{\mathcal{A}}N\right\}.$ 

(3) The assertion follows from (2) by letting $N=G$, in which case $\operatorname{qGpd}_\mathcal{A}G \leqslant \operatorname{Gpd}_\mathcal{A}G = 0$.

(4) Consider the proper Gorenstein projective resolution of $M$ $$G_\bullet :\cdots\xrightarrow{d_{n+1}} G_{n}\xrightarrow{d_n} G_{n-1}\xrightarrow{d_{n-1}} \cdots \xrightarrow{d_2} G_1 \xrightarrow{d_1} G_0\xrightarrow{d_0} M \rightarrow 0$$ with $\operatorname{Im}d_{r}\cong M$, for some $r\geqslant 1$. Let $s = \max\{1, r-1\}$. Then the truncated complex
$$G'_\bullet = (0 \to G_s \xrightarrow{d_s} \cdots \xrightarrow{d_1} G_0 \to 0)$$
is a complex of Gorenstein projective objects with $\operatorname{H}_0(G'_\bullet)\cong M$ and $\operatorname{H}_i(G'_\bullet) = 0$ for $0 < i < s$. If $r\geqslant 2$, then $\operatorname{H}_s(G'_\bullet)= \operatorname{Ker}d_s=\operatorname{Ker}d_{r-1}\cong\operatorname{Im}d_{r}\cong M$.
If $r=1$ then $\operatorname{Im}d_{1}\cong M$, and thus we may choose $G_1 =G_0$ 
as the proper Gorenstein projective cover of $\operatorname{Im}d_{1}$, which yields $\operatorname{H}_s(G'_\bullet) = \operatorname{Ker}d_{1}\cong \operatorname{Ker}d_{0}\cong M$.
Since $G_\bullet$ is a proper Gorenstein projective resolution, we have the natural isomorphism
$\operatorname{H}_i(\operatorname{Hom}_{\mathcal{A}}(G, G'_\bullet))	\cong \operatorname{Hom}_{\mathcal{A}}(G, \operatorname{H}_i(G'_\bullet))$ for any \(G \in \mathcal{GP}(\mathcal{A})\) and every $i \in \mathbb{Z}$.
Therefore, $G'_\bullet$ is a quasi-Gorenstein projective resolution of $M$.
Since $H_s(G'_\bullet)\neq0$, we get $\operatorname{qGpd}_\mathcal{A} M \leqslant \sup(G'_\bullet) -\operatorname{hsup}(G'_\bullet) =s-s= 0$.
\end{proof}

As proved in \cite[Proposition~3.3 (4)]{GJT21}, the quasi-projective dimension of an object is at least that of its first syzygy. Motivated by this, we consider the following proposition, the proof of which shows that condition (iii) in Definition~\ref*{quasi} (1) is truly necessary.

\begin{proposition}\label{short exact sequence 1}
Let $0 \rightarrow N \rightarrow G \xrightarrow{\pi} M \rightarrow 0$ 
be a proper exact sequence in $\mathcal{A}$ such that $G$ is Gorenstein projective. Then
$\operatorname{qGpd}_{\mathcal{A}}N\leqslant \operatorname{qGpd}_{\mathcal{A}}M$.
\end{proposition}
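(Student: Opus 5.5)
The plan follows the proof of \cite[Proposition~3.3 (4)]{GJT21}: realize $N$ homologically as the kernel of a surjection on homology, coming from a chain map into a quasi-Gorenstein projective resolution of $M$, and take a mapping cone. We may assume $\operatorname{qGpd}_{\mathcal{A}}M<\infty$ and $N\neq 0$. Fix a finite quasi-Gorenstein projective resolution $X_\bullet$ of $M$ with $\operatorname{qGpd}_{\mathcal{A}}M=\sup X_\bullet-\operatorname{hsup}X_\bullet$ and $\operatorname{H}_i(X_\bullet)\cong M^{\oplus a_i}$.

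\textbf{Step 1: lifting to cycles.} For each $i$ with $a_i>0$, consider $\pi^{\oplus a_i}\colon G^{\oplus a_i}\to M^{\oplus a_i}\cong \operatorname{H}_i(X_\bullet)$. By condition (iii) of Definition~\ref{quasi}, the natural morphism $l_i$ for $F=\operatorname{Hom}_{\mathcal{A}}(G,-)$ is an isomorphism. Lemma~\ref{naturally} then says that $\operatorname{Hom}_{\mathcal{A}}(G,\pi_i)$ is an epimorphism, where $\pi_i\colon\operatorname{Z}_i(X_\bullet)\to \operatorname{H}_i(X_\bullet)$ is the canonical map. Hence $\pi^{\oplus a_i}$ lifts to a map $G^{\oplus a_i}\to \operatorname{Z}_i(X_\bullet)\subseteq X_i$. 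This is exactly where condition (iii) is needed.

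\textbf{Step 2: the cone.} Let $L_\bullet=\bigoplus_i G^{\oplus a_i}[i]$, a complex with zero differentials. The lifts assemble into a chain map $\varphi\colon L_\bullet\to X_\bullet$ with $\operatorname{H}_i(\varphi)=\pi^{\oplus a_i}$, which is an epimorphism. Set $C_\bullet=\operatorname{Cone}(\varphi)$, so $C_i=X_i\oplus G^{\oplus a_{i-1}}$ is Gorenstein projective. The long exact homology sequence, together with the surjectivity of each $\operatorname{H}(\varphi)$, gives
\[\operatorname{H}_i(C_\bullet)\cong \operatorname{Ker}\operatorname{H}_{i-1}(\varphi)\cong N^{\oplus a_{i-1}}.\]
Therefore $\operatorname{hsup}C_\bullet=\operatorname{hsup}X_\bullet+1$ and $\sup C_\bullet\leqslant \max\{\sup X_\bullet,\operatorname{hsup}X_\bullet+1\}$. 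In both cases $\sup C_\bullet-\operatorname{hsup}C_\bullet\leqslant \sup X_\bullet-\operatorname{hsup}X_\bullet$. It also follows that $C_\bullet$ is bounded and that $\inf C_\bullet\geqslant\inf X_\bullet$, so condition (ii) holds.

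\textbf{Step 3: condition (iii) for $C_\bullet$ (the main obstacle).} Let $G'\in\mathcal{GP}(\mathcal{A})$. Applying $\operatorname{Hom}_{\mathcal{A}}(G',-)$ termwise gives $\operatorname{Hom}(G',C_\bullet)=\operatorname{Cone}(\operatorname{Hom}(G',\varphi))$, which has its own long exact homology sequence.
\begin{itemize}
\item By Lemma~\ref{commutate} applied to $\varphi$, together with condition (iii) for $X_\bullet$ and the trivial identity $\operatorname{H}(\operatorname{Hom}(G',L_\bullet))=\operatorname{Hom}(G',\operatorname{H}(L_\bullet))$, the map $\operatorname{H}_{i}(\operatorname{Hom}(G',\varphi))$ is identified with $\operatorname{Hom}(G',\pi^{\oplus a_{i}})$.
\item Since the given sequence is proper exact, $\operatorname{Hom}(G',\pi^{\oplus a_i})$ is surjective with kernel $\operatorname{Hom}(G',N^{\oplus a_i})$.
\item Hence the long exact sequence yields an abstract isomorphism $\operatorname{H}_i(\operatorname{Hom}(G',C_\bullet))\cong \operatorname{Hom}(G',\operatorname{H}_i(C_\bullet))$.
\end{itemize}

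The delicate point is showing that this isomorphism is the natural morphism $l_i$ of Lemma~\ref{naturally}. I would compare the two short exact sequences
\[0\to \operatorname{H}_i(\operatorname{Hom}(G',X_\bullet))\to\operatorname{H}_i(\operatorname{Hom}(G',C_\bullet))\to \operatorname{Ker}\operatorname{H}_{i-1}(\operatorname{Hom}(G',\varphi))\to 0\]
and
\[0\to \operatorname{Hom}(G',\operatorname{H}_i(X_\bullet))\to\operatorname{Hom}(G',\operatorname{H}_i(C_\bullet))\to\cdots.\]
The second is the image under the left exact functor $\operatorname{Hom}(G',-)$ of the sequence for $C_\bullet$, which is short exact because $\operatorname{H}(\varphi)$ is surjective. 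The vertical maps are the $l$'s. The square involving the inclusion $X_\bullet\to C_\bullet$ commutes by Lemma~\ref{commutate}. For the square involving the connecting part, I would check commutativity directly on cycles, using the explicit description of $l_i$ through $\delta_i$ and $F\pi_i$. The outer vertical maps are isomorphisms, so the five lemma then makes $l_i^{C_\bullet}$ an isomorphism.

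Putting the steps together, $C_\bullet$ is a finite quasi-Gorenstein projective resolution of $N$, and therefore $\operatorname{qGpd}_{\mathcal{A}}N\leqslant \sup C_\bullet-\operatorname{hsup}C_\bullet\leqslant \operatorname{qGpd}_{\mathcal{A}}M$.
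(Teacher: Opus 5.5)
Your Steps~1 and~2 match the paper's construction: you lift through $\operatorname{Z}_i(X_\bullet)$ using Lemma~\ref{naturally} and condition (iii), and you take the cone on $\bigoplus_i G^{\oplus a_i}[i]\to X_\bullet$ to get $\operatorname{H}_i(C_\bullet)\cong N^{\oplus a_{i-1}}$. Your bookkeeping for $\sup$, $\operatorname{hsup}$ and the case $N=0$ is also fine.

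The problem is in Step~3, exactly where you say the argument is delicate: the two sequences you propose to compare are not exact. In the long exact sequence of $0\to X_\bullet\to C_\bullet\to L_\bullet[1]\to 0$, the kernel of $\operatorname{H}_i(X_\bullet)\to\operatorname{H}_i(C_\bullet)$ is the image of $\operatorname{H}_i(\varphi)$. That image is everything, since $\operatorname{H}_i(\varphi)$ is surjective. So this map is zero, not a monomorphism, and hence $\operatorname{Hom}(G',\operatorname{H}_i(X_\bullet))\to\operatorname{Hom}(G',\operatorname{H}_i(C_\bullet))$ is zero too. For the same reason (you showed $\operatorname{H}_i(\operatorname{Hom}(G',\varphi))$ is surjective), $\operatorname{H}_i(\operatorname{Hom}(G',X_\bullet))\to\operatorname{H}_i(\operatorname{Hom}(G',C_\bullet))$ is zero. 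The correct left-hand terms are the cokernels of $\operatorname{H}_i(\varphi)$ and of $\operatorname{H}_i(\operatorname{Hom}(G',\varphi))$, and both are $0$. Your version also clashes with your own Step~2, where you correctly found that $\operatorname{H}_i(C_\bullet)\to\operatorname{Ker}\operatorname{H}_{i-1}(\varphi)$ is already an isomorphism. Consequently the five lemma with $l_i^{X_\bullet}$ as the left outer map cannot be applied.

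The fix, which is what the paper does, is to drop those terms and compare through the projection $p\colon C_\bullet\to L_\bullet[1]$. The top row is
\[0\to \operatorname{H}_i(\operatorname{Hom}(G',C_\bullet))\to \operatorname{H}_{i-1}(\operatorname{Hom}(G',L_\bullet))\to \operatorname{H}_{i-1}(\operatorname{Hom}(G',X_\bullet))\to 0,\]
which is exact because $\operatorname{H}(\operatorname{Hom}(G',\varphi))$ is surjective in every degree. The bottom row is $\operatorname{Hom}(G',-)$ applied to $0\to\operatorname{H}_i(C_\bullet)\to\operatorname{H}_{i-1}(L_\bullet)\to\operatorname{H}_{i-1}(X_\bullet)\to 0$. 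The vertical maps are $l_i^{C_\bullet}$, $l_{i-1}^{L_\bullet}$ and $l_{i-1}^{X_\bullet}$.

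Both squares commute by Lemma~\ref{commutate}, applied to the chain maps $p$ and $\varphi$. No cycle-level check of a ``connecting'' square is needed, because the map out of $\operatorname{H}_i(C_\bullet)$ is induced by the chain map $p$. Since $l_{i-1}^{L_\bullet}$ (zero differentials) and $l_{i-1}^{X_\bullet}$ (condition (iii)) are isomorphisms, $l_i^{C_\bullet}$ is the induced map between the kernels. It is therefore an isomorphism.
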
	
     \begin{proof} 
     	The inequality automatically holds if $\operatorname{qGpd}_{\mathcal{A}}M=\infty.$ Now we assume  $\operatorname{qGpd}_{\mathcal{A}} M<\infty$. Let $X_\bullet$ be a finite quasi-Gorenstein projective resolution of $M$ such that $\operatorname{qGpd}_{\mathcal{A}} M=\sup X_\bullet-\operatorname{hsup} X_\bullet.$ Then for each $i \in \mathbb{Z},$ there is an isomorphsim $g_i\colon M^{\oplus a_i}\rightarrow \operatorname{H}_i(X_\bullet)$ for some integers $a_i\in \mathbb{N} $ (not all $a_i$ are zero), and for any $G' \in \mathcal{GP(A)}$ the morphism
     	$$l_i^{X_\bullet}\colon \operatorname{H}_i(\operatorname{Hom}_\mathcal{A}(G', X_\bullet)) \to \operatorname{Hom}_\mathcal{A}(G', \operatorname{H}_i(X_\bullet))$$ induced by
     	$\operatorname{Hom}_\mathcal{A}(G', -)$
     	is an isomorphism. 
     	Set $Q_i=G^{\oplus a_i}$ and consider the complex
      	\[Q_\bullet=\left(\cdots \rightarrow Q_{i+1} \xrightarrow{0} Q_i \xrightarrow{0} Q_{i-1} \rightarrow \cdots\right).\] 
      	Applying $\operatorname{Hom}_{\mathcal{A}}(Q_i, -)$ to the canonical epimorphism
      	$\pi_i \colon \operatorname{Z}_i(X_{\bullet}) \to \operatorname{H}_i(X_{\bullet})$, we get an epimorphism
      	$\operatorname{Hom}_{\mathcal{A}}(Q_i, \pi_i)\colon \operatorname{Hom}_{\mathcal{A}}(Q_i, \operatorname{Z}_i(X_\bullet)) \to \operatorname{Hom}_{\mathcal{A}}(Q_i, \operatorname{H}_i(X_\bullet))$ by Lemma \ref{naturally}$.$ Therefore, the morphism $g_i\pi^{\oplus a_i}\colon Q_i \rightarrow \operatorname{H}_i(X_\bullet)$ lifts to a morphism $\beta_i\colon Q_i \rightarrow \mathrm{Z}_i(X_\bullet)$ such that $\pi_i\beta_i=g_i\pi^{\oplus a_i}.$ Composing this morphism $\beta_i$ with the inclusion $\eta_i\colon \mathrm{Z}_i(X_\bullet) \rightarrow X_i,$ we get a morphism $\alpha_i=\eta_i\beta_i\colon Q_i \rightarrow X_i.$ Let $d_\bullet$ be the differential of $X_\bullet.$
      	 Then we have $d_i\alpha_i=d_i(\eta_i\beta_i)=(d_i\eta_i)\beta_i=0$, which gives rise to a chain map $\alpha_\bullet\colon Q_\bullet \rightarrow X_\bullet$. The short exact sequence of complexes $0 \rightarrow X_\bullet \rightarrow \operatorname{Cone}(\alpha_\bullet) \rightarrow Q_\bullet[1] \rightarrow 0$ yields a long exact sequence of homology
      	$$\cdots \xrightarrow{\operatorname{H}_{i+1}(\alpha_{\bullet})} \operatorname{H}_{i+1}(X_{\bullet}) \rightarrow \operatorname{H}_{i+1}(\operatorname{Cone}(\alpha_{\bullet})) \rightarrow \operatorname{H}_i(Q_{\bullet}) \xrightarrow{\operatorname{H}_i(\alpha_{\bullet})} \operatorname{H}_i(X_{\bullet})\rightarrow\cdots .$$
     	As $\operatorname{H}_i\left(\alpha_{\bullet}\right)=g_i\pi^{\oplus a_i}$ is an epimorphism, the above long exact sequence breaks into short exact sequences
     	\begin{equation}
     		0 \rightarrow \operatorname{H}_{i+1}(\operatorname{Cone}(\alpha_\bullet)) \rightarrow \operatorname{H}_i(Q_\bullet) \xrightarrow{g_i\pi^{\oplus a_i}} \operatorname{H}_i(X_\bullet) \rightarrow 0, \label{eq:short_exact_seq}
     	\end{equation}
     	and then $\operatorname{H}_{i+1}(\operatorname{Cone}(\alpha_\bullet))\cong\operatorname{Ker}(g_i\pi^{\oplus a_i})\cong\operatorname{Ker}(\pi^{\oplus a_i})\cong N^{\oplus a_i}$ for all $i \in \mathbb{Z}$. Therefore, to show $\operatorname{Cone}(\alpha_\bullet)$  is a quasi-Gorenstein projective resolution of $N,$ it remains to prove that the morphism
     	$$l_i ^{\operatorname{Cone}(\alpha_\bullet)}\colon \operatorname{H}_i(\operatorname{Hom}_\mathcal{A}(G', \operatorname{Cone}(\alpha_\bullet))) \to \operatorname{Hom}_\mathcal{A}(G', \operatorname{H}_i(\operatorname{Cone}(\alpha_\bullet)))$$ is an isomorphsim for any $i \in \mathbb{Z}$ and any $G' \in \mathcal{GP(A)}$. 
     	
     	Applying $\operatorname{Hom}_\mathcal{A}(G', -)$ to the  
     	split exact sequence of complexes $0\to X_\bullet \to \Cone(\alpha_\bullet) \to Q_\bullet[1]\to 0$, we get the following short exact sequence  $$0 \to \operatorname{Hom}_\mathcal{A}(G', X_\bullet) \to \operatorname{Hom}_\mathcal{A}(G', \operatorname{Cone}(\alpha_\bullet)) \to \operatorname{Hom}_\mathcal{A}(G', Q_\bullet[1]) \to 0,$$     	
  which yields a long exact sequence   
     \begin{gather}
     	\cdots \xrightarrow{\operatorname{H}_{i+1}((\alpha_\bullet)\ast)} \operatorname{H}_{i+1}(\operatorname{Hom}_\mathcal{A}(G', X_\bullet)) \longrightarrow \operatorname{H}_{i+1}(\operatorname{Hom}_\mathcal{A}(G', \operatorname{Cone}(\alpha_\bullet))) \nonumber\\
     	\longrightarrow \operatorname{H}_i(\operatorname{Hom}_\mathcal{A}(G', Q_\bullet))
     	\xrightarrow{\operatorname{H}_i((\alpha_\bullet)\ast)} \operatorname{H}_i(\operatorname{Hom}_\mathcal{A}(G', X_\bullet)) \longrightarrow \cdots.
     	\label{long-exact-ast}
     \end{gather}
   By Lemma \ref{commutate}, we have the following exact commutative diagram
   \[
   \begin{tikzcd}[column sep=24pt]
   	\operatorname{H}_{i+1}(\operatorname{Hom}_\mathcal{A}(G', \operatorname{Cone}(\alpha_\bullet))) 
   	\arrow[r] 
   	\arrow[d, "l_{i+1}^{\operatorname{Cone}(\alpha_\bullet)}" ]
   	& \operatorname{H}_i(\operatorname{Hom}_\mathcal{A}(G', Q_\bullet)) 
   	\arrow[r, "\operatorname{H}_i((\alpha_\bullet)_\ast)"] 
   	\arrow[d, "l_i^{Q_\bullet}" ]
   	& \operatorname{H}_i(\operatorname{Hom}_\mathcal{A}(G', X_\bullet)) 
   	\arrow[d, "l_i^{X_\bullet}" ] \\
   	\operatorname{Hom}_\mathcal{A}(G', \operatorname{H}_{i+1}(\operatorname{Cone}(\alpha_\bullet))) 
   	\arrow[r, hook] 
   	& \operatorname{Hom}_\mathcal{A}(G', \operatorname{H}_i(Q_\bullet)) 
   	\arrow[r, "(g_i\pi^{\oplus a_i})_\ast"] 
   	& \operatorname{Hom}_\mathcal{A}(G', \operatorname{H}_i(X_\bullet)),
   \end{tikzcd}
   \]
   where the second row is obtained by applying $\operatorname{Hom}_\mathcal{A}(G', -)$ to ~(\ref{eq:short_exact_seq}).	Since the differentials of $Q_\bullet$ are zero,
   it follows that $l_i^{Q_\bullet}$ is an isomorphism. On the other hand, the proper exactness of \(0 \to N \to G \xrightarrow{\pi} M \to 0\) implies that  $\pi_\ast=\operatorname{Hom}_\mathcal{A}(G', \pi)$ is an epimorphism, and hence  $(g_i\pi^{\oplus a_i})_\ast$ is an epimorphism as well.
     Since $l_i^{X_\bullet}$ is an isomorphism, it follows from the above commutative diagram that $\operatorname{H}_i((\alpha_\bullet)_\ast)$is an epimorphism$.$ Then the long exact sequence \eqref{long-exact-ast} breaks into short exact sequences
	$0 \to \operatorname{H}_{i+1}(\operatorname{Hom}_\mathcal{A}(G', \operatorname{Cone}(\alpha_\bullet))) \to \operatorname{H}_i(\operatorname{Hom}_\mathcal{A}(G', Q_\bullet)) \xrightarrow{\operatorname{H}_i((\alpha_\bullet)\ast)} \operatorname{H}_i(\operatorname{Hom}_\mathcal{A}(G', X_\bullet)) \to 0.$ Applying the Snake Lemma to the above commutative diagram, we obtain that
	$l_{i+1}^{\operatorname{Cone}(\alpha_\bullet)}$ is an isomorphism for any  $i \in \mathbb{Z}$. 
	Therefore, \begin{center}
		$\operatorname{Cone}(\alpha_\bullet)=\left(\cdots \rightarrow Q_i \oplus X_{i+1} \rightarrow Q_{i-1} \oplus X_i \rightarrow \cdots\right)$
	\end{center}
     is a quasi-Gorenstein projective resolution of $N.$
     Since $\operatorname{H}_{i+1}(\operatorname{Cone}(\alpha_\bullet))\cong N^{\oplus a_i}$, we have $\operatorname{hsup(Cone}(\alpha_\bullet)) =\operatorname{hsup}X_\bullet+1.$ Moreover, it is clear that $\operatorname{sup(Cone}(\alpha_\bullet))\leqslant \sup X_\bullet+1$. Therefore, we have  
      	$\operatorname{qGpd}_{\mathcal{A}}N \leqslant \sup (\operatorname{Cone}(\alpha_\bullet))-\operatorname{hsup} (\operatorname{Cone}(\alpha_\bullet)) \leqslant (\operatorname{sup} X_\bullet+1)-(\operatorname{hsup} X_\bullet+1)=\sup X_\bullet-\operatorname{hsup}X_\bullet=\operatorname{qGpd}_{\mathcal{A}} M$.   
     \end{proof}
     
 Motivated by \cite[Proposition~3.2]{DFG}, we consider the following proposition.     
\begin{proposition}\label{short exact sequence 2}
	Let $0 \to G \xrightarrow{f} M \xrightarrow{g} N \to 0 $ be a proper exact sequence in $\mathcal{A}$ such that $G$ is Gorenstein projective. Then $\mathrm{qGpd}_\mathcal{A}N \leqslant \sup\{1, \mathrm{qGpd}_\mathcal{A}M\}$.
\end{proposition}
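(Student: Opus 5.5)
We follow the pattern of Proposition~\ref{short exact sequence 1}, but now we build a complex whose homology is $N$ rather than $N$'s syzygy. If $\operatorname{qGpd}_{\mathcal{A}}M=\infty$ there is nothing to prove, so assume it is finite. Let $X_\bullet$ be a finite quasi-Gorenstein projective resolution of $M$ with $\operatorname{qGpd}_{\mathcal{A}}M=\sup X_\bullet-\operatorname{hsup}X_\bullet$, and fix isomorphisms $g_i\colon M^{\oplus a_i}\to \operatorname{H}_i(X_\bullet)$.

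Set $Q_i=G^{\oplus a_i}$ and let $Q_\bullet$ be the complex with zero differentials. Since $G$ is Gorenstein projective and $l_i^{X_\bullet}$ is an isomorphism, Lemma~\ref{naturally} shows that $\operatorname{Hom}_{\mathcal{A}}(Q_i,\pi_i)$ is epic. Hence $g_if^{\oplus a_i}\colon Q_i\to \operatorname{H}_i(X_\bullet)$ lifts to $\beta_i\colon Q_i\to \operatorname{Z}_i(X_\bullet)$. Composing with the inclusion gives a chain map $\alpha_\bullet\colon Q_\bullet\to X_\bullet$ with $\operatorname{H}_i(\alpha_\bullet)=g_if^{\oplus a_i}$, which is a monomorphism. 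The long exact homology sequence of $0\to X_\bullet\to\operatorname{Cone}(\alpha_\bullet)\to Q_\bullet[1]\to 0$ then splits into short exact sequences
$$0\to \operatorname{H}_i(X_\bullet)\to \operatorname{H}_i(\operatorname{Cone}(\alpha_\bullet))\to \operatorname{H}_{i-1}(Q_\bullet)\xrightarrow{\ \ } \cdots,$$
because the connecting maps are the injective maps $\operatorname{H}_{i-1}(\alpha_\bullet)$. So $\operatorname{H}_i(\operatorname{Cone}(\alpha_\bullet))\cong\operatorname{Coker}(g_if^{\oplus a_i})\cong N^{\oplus a_i}$. The cone consists of Gorenstein projectives $Q_{i-1}\oplus X_i$, and it is bounded and bounded below.

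The key step is condition (iii). Apply $\operatorname{Hom}_{\mathcal{A}}(G',-)$ to the degreewise split sequence of complexes and compare, via Lemma~\ref{commutate}, with the sequence obtained by applying $\operatorname{Hom}_{\mathcal{A}}(G',-)$ to $0\to M^{\oplus a_i}\to \cdots$. Proper exactness of $0\to G\to M\to N\to 0$ gives a short exact sequence
$$0\to \operatorname{Hom}(G',\operatorname{H}_i(Q_\bullet))\to \operatorname{Hom}(G',\operatorname{H}_i(X_\bullet))\to\operatorname{Hom}(G',\operatorname{H}_i(\operatorname{Cone}))\to 0.$$
Since $l^{Q_\bullet}$ and $l^{X_\bullet}$ are isomorphisms, $\operatorname{H}_i((\alpha_\bullet)_*)$ is injective. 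The top long exact sequence therefore also breaks into short exact sequences, and the five lemma (or Snake Lemma) shows that $l_i^{\operatorname{Cone}(\alpha_\bullet)}$ is an isomorphism. One must check that the relevant squares commute, including the one involving the connecting map. This is the main technical obstacle: Lemma~\ref{commutate} handles the maps induced by chain maps, but the square with the cokernel map needs care. I would verify it directly, using $\operatorname{H}_i(\operatorname{Cone})\cong\operatorname{Coker}$ and the epimorphism $\operatorname{Hom}(G',\pi_i)$ from Lemma~\ref{naturally}.

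Finally, count the indices. We have $\operatorname{hsup}\operatorname{Cone}(\alpha_\bullet)=\operatorname{hsup}X_\bullet=:h$, provided $N\ne 0$; if $N=0$ the claim is trivial. Also $\sup\operatorname{Cone}(\alpha_\bullet)\le\max\{\sup X_\bullet, h+1\}$, since $Q_{i-1}\neq0$ only when $a_{i-1}\ne 0$, which forces $i-1\le h$. Hence
$$\operatorname{qGpd}_{\mathcal{A}}N\le \max\{\sup X_\bullet-h,\,1\}=\sup\{1,\operatorname{qGpd}_{\mathcal{A}}M\}.$$
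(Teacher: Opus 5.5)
Your proposal is correct and follows the paper's own proof essentially step for step. It uses the same ingredients: the complex $Q_\bullet$ of copies of $G$, the lift $\alpha_\bullet$ with $\operatorname{H}_i(\alpha_\bullet)=g_if^{\oplus a_i}$, the identification $\operatorname{H}_i(\operatorname{Cone}(\alpha_\bullet))\cong N^{\oplus a_i}$, the comparison of short exact sequences for condition (iii), and the count $\sup\operatorname{Cone}(\alpha_\bullet)\leqslant\max\{\sup X_\bullet,\operatorname{hsup}X_\bullet+1\}$. Two minor points: the obstacle you flag is not real, since $\operatorname{H}_i(X_\bullet)\to\operatorname{H}_i(\operatorname{Cone}(\alpha_\bullet))$ is induced by the chain map $X_\bullet\hookrightarrow\operatorname{Cone}(\alpha_\bullet)$, so Lemma~\ref{commutate} already gives that square; and your displayed sequence should read $0\to\operatorname{H}_i(Q_\bullet)\to\operatorname{H}_i(X_\bullet)\to\operatorname{H}_i(\operatorname{Cone}(\alpha_\bullet))\to 0$.
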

\begin{proof}
	We may assume that $M$ has finite quasi-Gorenstein projective dimension. Let $X_\bullet$ be a bounded quasi-Gorenstein projective resolution of $M$ such that $\mathrm{qGpd}_\mathcal{A}M = \sup X_\bullet -\operatorname{hsup} X_\bullet.$ Then for each $i \in \mathbb{Z},$ there is an isomorphsim $h_i\colon M^{\oplus a_i}\rightarrow \operatorname{H}_i(X_\bullet)$ for some integers $a_i\in \mathbb{N} $ (not all $a_i$ are zero), and for any $G' \in \mathcal{GP(A)}$ the morphism
	$l_i^{ X_\bullet} \colon \operatorname{H}_i(\operatorname{Hom}_\mathcal{A}(G', X_\bullet)) \to \operatorname{Hom}_\mathcal{A}(G', \operatorname{H}_i(X_\bullet))$ induced by $\operatorname{Hom}_\mathcal{A}(G',-)$ 
	is an isomorphism$.$ Let $Q_i = G^{\oplus a_i}$ and consider the complex
	\[Q_\bullet=(\cdots \rightarrow Q_{i+1} \xrightarrow{0} Q_i \xrightarrow{0} Q_{i-1} \rightarrow \cdots).\]
	By a similar argument as in Proposition~\ref{short exact sequence 1}, we have a chain map $\alpha_\bullet\colon Q_\bullet \rightarrow X_\bullet$ with $\operatorname{H}_i\left(\alpha_{\bullet}\right)=h_if^{\oplus a_i}.$
	Also, the short exact sequence of complexes $0 \rightarrow X_\bullet \rightarrow \operatorname{Cone}(\alpha_\bullet) \rightarrow Q_\bullet[1] \rightarrow 0$ yields the long exact sequence of homology $$\cdots \rightarrow \operatorname{H}_i\left(Q_{\bullet}\right) \xrightarrow{\operatorname{H}_i(\alpha_{\bullet})} \operatorname{H}_i(X_\bullet)) \xrightarrow{\varphi_i} \operatorname{H}_i(\operatorname{Cone}\left(\alpha_{\bullet}\right)) \rightarrow \operatorname{H}_{i-1}(Q_\bullet)\xrightarrow{\operatorname{H}_{i-1}(\alpha_\bullet)}\cdots,$$
	which breaks into short exact sequences
	\begin{equation}
		0\rightarrow \operatorname{H}_i(Q_\bullet) \xrightarrow{h_if^{\oplus a_i}} \mathrm{H}_i(X_\bullet) \xrightarrow{\varphi_i} \operatorname{H}_i(\operatorname{Cone}(\alpha_\bullet)) \rightarrow 0, \label{eq:short_exact_seq-1}
	\end{equation}
 as $\operatorname{H}_i\left(\alpha_{\bullet}\right)=h_if^{\oplus a_i}$ is monomorphic.
Then for all $i \in \mathbb{Z},$ we have $\operatorname{H}_i(\operatorname{Cone}(\alpha_\bullet))
\\ \cong
\operatorname{Coker}(h_if^{\oplus a_i})\cong\operatorname{Coker}(f^{\oplus a_i})\cong N^{\oplus a_i}$ and $\varphi_i=g^{\oplus a_i}h_i^{-1}$. 
Now we claim that the induced morphism $$l_{i}^{\operatorname{Cone}(\alpha_\bullet)}\colon \operatorname{H}_i(\operatorname{Hom}_\mathcal{A}(G', \operatorname{Cone}(\alpha_\bullet))) \to \operatorname{Hom}_\mathcal{A}(G', \operatorname{H}_i(\operatorname{Cone}(\alpha_\bullet)))$$ is an isomorphism, for any $G' \in\mathcal{GP(A)}$. Indeed, this can be proved in a manner similar to that in Proposition~\ref{short exact sequence 1}.
Therefore, the complex
		\begin{center}
			$\operatorname{Cone}(\alpha_\bullet)=\left(\cdots \rightarrow Q_i \oplus X_{i+1} \rightarrow Q_{i-1} \oplus X_i \rightarrow \cdots\right)$
		\end{center}
		is a quasi-Gorenstein projective resolution of $N.$ Since $\operatorname{H}_i(\operatorname{Cone}(\alpha_\bullet))\cong N^{\oplus a_i}$, we have  $\operatorname {hsup}(\operatorname{Cone}(\alpha_\bullet)) =\operatorname{hsup} X_\bullet.$ On the other hand, it follows from the definition of mapping cone and from the definition of the complex $Q_\bullet$ that 
		 $\sup(\operatorname{Cone}(\alpha_\bullet)) = \sup\{\operatorname{hsup}X_\bullet + 1, \sup X_\bullet\}.$ Therefore, we have \begin{center}
			$\operatorname{qGpd}_{\mathcal{A}}N \leqslant \operatorname{sup} (\operatorname{Cone}(\alpha_\bullet))-\operatorname{hsup} (\operatorname{Cone}(\alpha_\bullet)) \leqslant \operatorname{sup}\{1, \operatorname{sup} X_\bullet - \operatorname{hsup}X_\bullet\}$.
		\end{center}
		This shows that $\operatorname{qGpd}_\mathcal{A}N\leqslant\{1,\operatorname{qGpd}_{\mathcal{A}}M\}$.
	\end{proof}
 
 		The following proposition is inspired by \cite[Proposition~3.8]{CCL26}.
     
\begin{proposition} \label{short exact sequence 3}
	Let $0 \rightarrow N \xrightarrow{f} E \rightarrow M \rightarrow 0$ be a proper exact sequence in $\mathcal{A}$ such that $E$ is injective and Gorenstein projective$.$ Then $\operatorname{qGpd}_{\mathcal{A}}M \leqslant \operatorname{qGpd}_{\mathcal{A}}N+1$.
\end{proposition}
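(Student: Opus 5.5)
We follow the pattern of Propositions~\ref{short exact sequence 1} and \ref{short exact sequence 2}, now with a chain map \emph{out of} a resolution of $N$ into a complex of copies of $E$. Assume $\operatorname{qGpd}_{\mathcal{A}}N<\infty$ and fix a finite quasi-Gorenstein projective resolution $X_\bullet$ of $N$ with $\operatorname{qGpd}_{\mathcal{A}}N=\sup X_\bullet-\operatorname{hsup}X_\bullet$. Write $h_i\colon \operatorname{H}_i(X_\bullet)\to N^{\oplus a_i}$ for the given isomorphisms. Put $Q_i=E^{\oplus a_i}$, and let $Q_\bullet$ be the complex with zero differentials. The proof then has three steps.

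\textbf{Step 1: the chain map.} Because $E$ is injective, the map $f^{\oplus a_i}h_i\pi_i\colon \operatorname{Z}_i(X_\bullet)\to Q_i$ extends along the monomorphism $\operatorname{Z}_i(X_\bullet)\hookrightarrow X_i$ to a map $\alpha_i\colon X_i\to Q_i$. We need $\alpha_\bullet$ to be a chain map, which here means $\alpha_{i}d_{i+1}=0$. This holds because $d_{i+1}$ factors through $\operatorname{B}_i\subseteq \operatorname{Z}_i$, and $\pi_i$ kills $\operatorname{B}_i$. Then $\operatorname{H}_i(\alpha_\bullet)=f^{\oplus a_i}h_i$, which is a monomorphism. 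The long exact homology sequence of $X_\bullet\to Q_\bullet\to \operatorname{Cone}(\alpha_\bullet)\to X_\bullet[1]$ breaks into short exact sequences
\[0\to \operatorname{H}_i(X_\bullet)\to \operatorname{H}_i(Q_\bullet)\to \operatorname{H}_i(\operatorname{Cone}(\alpha_\bullet))\to 0.\]
Hence $\operatorname{H}_i(\operatorname{Cone}(\alpha_\bullet))\cong M^{\oplus a_i}$. The cone has terms $Q_i\oplus X_{i-1}$, which are Gorenstein projective since $E$ is, and it is bounded below.

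\textbf{Step 2: condition (iii).} For $G'\in\mathcal{GP(A)}$, apply $\operatorname{Hom}_{\mathcal{A}}(G',-)$ to the degreewise split sequence $0\to Q_\bullet\to\operatorname{Cone}(\alpha_\bullet)\to X_\bullet[1]\to0$. This gives a long exact sequence, which we compare with the sequence obtained by applying $\operatorname{Hom}_{\mathcal{A}}(G',-)$ to the homology short exact sequences; the comparison maps are the $l_i$ of Lemma~\ref{commutate}. On the bottom row, $\operatorname{Hom}(G',\operatorname{H}_i(Q_\bullet))\to\operatorname{Hom}(G',\operatorname{H}_i(\operatorname{Cone}))$ is surjective because the sequence $0\to N\to E\to M\to0$ is proper exact. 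The maps $l^{Q_\bullet}$ and $l^{X_\bullet}$ are isomorphisms. The map $\operatorname{Hom}(G',\operatorname{H}_i X)\to\operatorname{Hom}(G',\operatorname{H}_i Q)$ is injective, so via the $l$'s the connecting maps in the top long exact sequence vanish, and the top row also splits into short exact sequences. The five lemma then makes $l_i^{\operatorname{Cone}(\alpha_\bullet)}$ an isomorphism. Thus $\operatorname{Cone}(\alpha_\bullet)$ is a quasi-Gorenstein projective resolution of $M$.

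\textbf{Step 3: the count.} We have $\operatorname{hsup}\operatorname{Cone}(\alpha_\bullet)=\operatorname{hsup}X_\bullet$. Also $\sup\operatorname{Cone}(\alpha_\bullet)\le\max\{\sup X_\bullet+1,\operatorname{hsup}X_\bullet\}=\sup X_\bullet+1$, since $Q_i\neq 0$ only for $i\le \operatorname{hsup}X_\bullet$. This gives $\operatorname{qGpd}_{\mathcal{A}}M\le\operatorname{qGpd}_{\mathcal{A}}N+1$.

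\textbf{Main obstacle.} I expect the hardest part to be the diagram chase in Step 2. One must check that the connecting map of the top row matches the bottom one under the natural maps $l$, which needs the naturality of Lemma~\ref{commutate} applied to the chain maps $Q_\bullet\to\operatorname{Cone}(\alpha_\bullet)$ and $\alpha_\bullet$. One must also use the proper exactness carefully to obtain surjectivity on the right.
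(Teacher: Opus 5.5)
Your proof is correct and follows essentially the same route as the paper. Both arguments use a chain map $X_\bullet\to E^{\oplus a_\bullet}$ built from the injectivity of $E$, take its mapping cone as the resolution of $M$, and compare the two homology sequences through the maps $l_i$ as in Proposition~\ref{short exact sequence 1}, with proper exactness of $0\to N\to E\to M\to 0$ giving surjectivity on the bottom row. Your write-up is more explicit than the paper's, which only points to these two arguments. One small correction: in the degreewise split sequence $0\to Q_\bullet\to\operatorname{Cone}(\alpha_\bullet)\to X_\bullet[1]\to 0$ the connecting map is $\operatorname{H}_{i-1}(\alpha_\ast)$, which is injective rather than zero; the map that vanishes is $\operatorname{H}_i(\operatorname{Hom}_{\mathcal{A}}(G',\operatorname{Cone}(\alpha_\bullet)))\to \operatorname{H}_{i-1}(\operatorname{Hom}_{\mathcal{A}}(G',X_\bullet))$.
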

\begin{proof}
	This inequality automatically holds if $\operatorname{qGpd}_{\mathcal{A}}N=\infty.$ So, we assume $\operatorname{qGpd}_{\mathcal{A}}N=m<\infty.$ Let $X_\bullet$ be a finite quasi-Gorenstein projective resolution of $N$
	such that $\operatorname{qGpd}_{\mathcal{A}} N=\sup X_\bullet-\operatorname{hsup} X_\bullet.$ Then for each $i \in \mathbb{Z}$, there is an isomorphism $g_i\colon \operatorname{H}_i\left(X_{\bullet}\right) \rightarrow N^{\oplus a_i}$ for some integers $a_i \in \mathbb{N}$ (not all $a_i$ are zero), and for any $G \in \mathcal{GP(A)}$ the induced morphism
	$l_i^{X_\bullet} \colon \operatorname{H}_i(\operatorname{Hom}_\mathcal{A}(G, X_\bullet)) \to \operatorname{Hom}_\mathcal{A}(G, \operatorname{H}_i(X_\bullet))$ is an isomorphism$.$ 
		Let $E_i=E^{\oplus a_i}$ and consider the complex
	\begin{center}
		$E_\bullet=(\cdots \rightarrow E_{i+1} \xrightarrow{0} E_i \xrightarrow{0} E_{i-1} \rightarrow \cdots).$
	\end{center}
	Following the approach of \cite[Proposition~3.8]{CCL26} (where the injectivety of $E$ is used), we get a chain map $h_{\bullet}: X_{\bullet} \rightarrow E_{\bullet}$
	such that $\operatorname{H}_i(h_{\bullet})= f^{\oplus a_i}g_i$ and $\operatorname{H}_i(\operatorname{Cone}(h_\bullet))\cong M^{\oplus a_i}$. Following the proof of
	 Proposition~\ref{short exact sequence 1}, we can show that the induced morphism
	 $$l_i^{\operatorname{Cone}(h_\bullet)} \colon \operatorname{H}_i(\operatorname{Hom}_\mathcal{A}(G, \operatorname{Cone}(h_\bullet))) \to \operatorname{Hom}_\mathcal{A}(G, \operatorname{H}_i(\operatorname{Cone}(h_\bullet)))$$ is an isomorphism, for any $i \in \mathbb{Z}$ and any $G \in \mathcal{GP(A)}$.
	Therefore, \begin{center}
		$\operatorname{Cone}(h_\bullet)=\left(\cdots \rightarrow X_i \oplus E_{i+1} \rightarrow X_{i-1} \oplus E_i \rightarrow \cdots\right)$
	\end{center}is a quasi-Gorenstein projective resolution of $M.$ Since $\operatorname{H}_i(\operatorname{Cone}(h_\bullet))\cong M^{\oplus a_i}$, we get $\operatorname {hsup(Cone}(h_\bullet)) =\operatorname{hsup}X_\bullet$. Moreover, it is clear that
	$\sup(\operatorname{Cone}(h_\bullet))=\sup X_\bullet+1$. Therefore, we have 
		$\operatorname{qGpd}_{\mathcal{A}}M \leqslant \sup (\operatorname{Cone}(h_\bullet))-\operatorname{hsup} (\operatorname{Cone}(h_\bullet)) \leqslant \operatorname{sup} X_\bullet+1-\operatorname{hsup} X_\bullet=\operatorname{qGpd}_{\mathcal{A}} N+1$.
\end{proof} 

Let $M$ be an object of $\mathcal{A}$. Then it follows from the definition of quasi-Gorenstein projective dimension that $\operatorname{qGpd}_{\mathcal{A}}M\leqslant\operatorname{Gpd}_{\mathcal{A}}M$.
Motivated by {\rm\cite[Proposition 3.1]{CCL26}}, we will show that this inequality becomes an equality when $\operatorname{Gpd}_{\mathcal{A}}M < \infty$.
        
\begin{proposition}\label{dim}
	Let $M$ be an object of $\mathcal{A}$ with $\operatorname{Gpd}_{\mathcal{A}}M < \infty.$ Then $\operatorname{qGpd}_{\mathcal{A}}M= \operatorname{Gpd}_{\mathcal{A}}M$. 
\end{proposition}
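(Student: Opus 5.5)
Since $\operatorname{qGpd}_{\mathcal{A}}M\leqslant\operatorname{Gpd}_{\mathcal{A}}M$ is already noted before the statement, we only need the reverse inequality. We follow the second half of the proof of Theorem~\ref{periodic}, with projectives replaced by Gorenstein projectives and $\operatorname{pd}$ replaced by $\operatorname{Gpd}$. If $M=0$ there is nothing to prove. Suppose $d=\operatorname{Gpd}_{\mathcal{A}}M>0$ and, for contradiction, that $\operatorname{qGpd}_{\mathcal{A}}M=r<d$. After shifting, choose a finite quasi-Gorenstein projective resolution
\[X_\bullet:\ 0\to X_r\to\cdots\to X_1\xrightarrow{d_1}X_0\xrightarrow{d_0}X_{-1}\to\cdots\to X_{-s}\to 0\]
with $\operatorname{hsup}X_\bullet=0$ and $\operatorname{H}_i(X_\bullet)\cong M^{\oplus a_i}$. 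Set $N=\operatorname{Coker}d_1$.

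The first step is to show that $\operatorname{Gpd}_{\mathcal{A}}N\leqslant r$. Since $\operatorname{H}_i(X_\bullet)=0$ for $i>0$, the complex $0\to X_r\to\cdots\to X_0\to N\to 0$ is exact with Gorenstein projective terms, so this holds. We then need the same short exact sequences as in the proof of Theorem~\ref{periodic}:
\[0\to M^{\oplus a_0}\to N\to \operatorname{Im}d_0\to0,\quad 0\to\operatorname{Im}d_i\to\operatorname{Ker}d_{i-1}\to M^{\oplus a_{i-1}}\to0,\quad 0\to\operatorname{Ker}d_{i-1}\to X_{i-1}\to\operatorname{Im}d_{i-1}\to 0.\]
These sequences are exact by construction.

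Next we use \cite[Theorem 2.20]{Hol}: since $\operatorname{Gpd}M=d$, there is a projective $Q$ with $\operatorname{Ext}^d(M,Q)\neq0$, and $\operatorname{Ext}^{j}(-,Q)$ vanishes for $j>d$ on every object of finite Gorenstein projective dimension. We also use that $\operatorname{Ext}^{j}(G,Q)=0$ for $j>0$ when $G\in\mathcal{GP}$, and that $\operatorname{Ext}^{j}(N,Q)=0$ for $j>r$, in particular for $j=d,d+1$.

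From the first sequence we get $\operatorname{Ext}^d(M,Q)^{\oplus a_0}\cong\operatorname{Ext}^{d+1}(\operatorname{Im}d_0,Q)$. The third sequence, together with $\operatorname{Ext}^{>0}(X_{i-1},Q)=0$, shifts degrees upward. The second sequence gives an exact piece
\[\operatorname{Ext}^{j}(M^{\oplus a_{i-1}},Q)\to\operatorname{Ext}^{j}(\operatorname{Ker}d_{i-1},Q)\to\operatorname{Ext}^{j}(\operatorname{Im}d_i,Q)\to\operatorname{Ext}^{j+1}(M^{\oplus a_{i-1}},Q),\]
where $j\geqslant d+1$, so the outer terms vanish because $\operatorname{Ext}^{>d}(M,Q)=0$. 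This is what makes the middle map an isomorphism. Chaining all of these isomorphisms down to $X_{-s}$ ends with $\operatorname{Ext}^{d+s+1}(\operatorname{Im}d_{-s},Q)$ where $\operatorname{Im}d_{-s}=0$. Hence $\operatorname{Ext}^d(M,Q)^{\oplus a_0}=0$.

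The main obstacle is that $a_0$ might be zero, and this has to be ruled out. Since $\operatorname{hsup}X_\bullet=0$ means $\operatorname{H}_0(X_\bullet)\neq0$, and $\operatorname{H}_0(X_\bullet)\cong M^{\oplus a_0}$, we get $a_0\geqslant1$. Therefore $\operatorname{Ext}^d(M,Q)=0$, a contradiction, and so $r\geqslant d$.

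This argument does not use condition (iii) of Definition~\ref{quasi}. The only things to check are that the vanishing of $\operatorname{Ext}^{>0}(G,Q)$ for Gorenstein projective $G$ replaces the projectivity of the $P_i$ used in Theorem~\ref{periodic}, and the bookkeeping of the degree shifts along the chain.
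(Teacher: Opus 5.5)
Your proof is correct and is essentially the paper's argument. It uses the same three short exact sequences, Holm's Theorem 2.20, and the chain of Ext isomorphisms ending at $\operatorname{Im}d_{-s}=0$. The only difference in packaging is that the paper argues directly, showing $\operatorname{Gpd}_{\mathcal{A}}N=\operatorname{Gpd}_{\mathcal{A}}M$ with $\operatorname{Gpd}_{\mathcal{A}}N\leqslant r$, rather than by contradiction as in Theorem~\ref{periodic}.

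One wording slip: $\operatorname{Ext}^{j}(-,Q)$ does not vanish for $j>d$ on every object of finite Gorenstein projective dimension, only on those of dimension at most $d$. You only apply it to $M$ and $N$, where it holds, so the argument is unaffected.
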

\begin{proof}
Since $\operatorname{qGpd}_{\mathcal{A}}M \leqslant \operatorname{Gpd}_{\mathcal{A}}M < \infty$, we may assume that $r=\operatorname{qGpd}_{\mathcal{A}}M$. By shifting, we obtain a quasi-Gorenstein projective resolution of $M$ of the form
		\begin{center}
			$X_{\bullet}:\quad 0 \rightarrow X_r \xrightarrow{d_r} X_{r-1} \xrightarrow{d_{r-1}} \cdots\xrightarrow{d_2} X_1 \xrightarrow{d_1} X_0 \xrightarrow{d_0} X_{-1}\cdots \xrightarrow{d_{-s+1}} X_{-s} \to 0,$
		\end{center}
	where $X_r \neq 0, \  \operatorname{hsup} X_{\bullet}=0$ and $r=\operatorname{Gpd}_{\mathcal{A}}(\operatorname {Coker}(d_1))$. Let $N= \operatorname {Coker}(d_1)$. It then follows that $\operatorname{qGpd}_{\mathcal{A}} M =r= \operatorname{Gpd}_{\mathcal{A}} N$. Now we claim $\operatorname{Gpd}_{\mathcal{A}} M =\operatorname{Gpd}_{\mathcal{A}} N$.
	
Since $ \operatorname{hsup} X_{\bullet}=0$, it follows that $\operatorname{H}_0(X_\bullet)\neq 0$, which implies $X_0\neq 0$. 
If $s = 0$, then $N \cong \operatorname{H}_0(X_\bullet) \cong M^{\oplus a_0}$, yielding $\operatorname{Gpd}_{\mathcal{A}} M = \operatorname{Gpd}_{\mathcal{A}} N.$ 
Now assume that $s > 0.$ Following the proof of {\rm\cite[Proposition 2.2 ]{CCL26}}, we obtain the following short exact sequences:
\begin{equation}\label{exact-1}
	0 \to M^{\oplus a_0} \to N \to \mathrm{Im}d_0 \to 0, 
\end{equation}
\begin{equation}\label{exact-2}
	0 \to \mathrm{Imd}_{i} \to \mathrm{Ker} d_{i-1} \to M^{\oplus a_{i-1}} \to 0, 
\end{equation}
\begin{equation}\label{exact-3}
	0 \to \mathrm{Ker} d_{i-1} \to P_{i-1} \to \mathrm{Im}d_{i-1} \to 0.
\end{equation}
Assume that $ \operatorname{Gpd}_{\mathcal{A}}M=n.$ Then it follows from {\rm\cite[Theorem 2.20]{Hol}}
that \(\operatorname{Ext}_\mathcal{A}^i(M, P) = 0\) for all $i > n$ and all $P \in \mathcal{P(A)}$, and there exists an object $P' \in \mathcal{P(A)}$ such that $\operatorname{Ext}_\mathcal{A}^n(M, P') \neq 0.$ Applying $\operatorname{Hom}_\mathcal{A}(-, P)$ to ~(\ref{exact-1}), we have a long exact sequence
\begin{equation} \label{long_exact-1}
	\begin{aligned}
		\cdots \to \operatorname{Ext}_\mathcal{A}^{n+j}(\operatorname{Im} d_0, P) 
		&\to \operatorname{Ext}_\mathcal{A}^{n+j}(N, P) 
		\to \operatorname{Ext}_\mathcal{A}^{n+j}(M^{\oplus a_0}, P)& \\
		&\to \operatorname{Ext}_\mathcal{A}^{n+j+1}(\operatorname{Im} d_0, P) 
		\to \cdots &.
	\end{aligned}
\end{equation}
	Consequently, if $\operatorname{Ext}_\mathcal{A}^{n+j}(\operatorname{Im} d_0, P) = 0$ for all $j > 0$ and all $P \in \mathcal{P(A)}$, 
	then 
    \begin{center}
    	$0 = (\operatorname{Ext}_\mathcal{A}^{n+j}(M, P))^{\oplus a_0} \cong \operatorname{Ext}_\mathcal{A}^{n+j}(M^{\oplus a_0}, P) \cong \operatorname{Ext}_\mathcal{A}^{n+j}(N, P),$
    \end{center}
  and the morphism $\operatorname{Ext}_\mathcal{A}^n(N, P') \rightarrow \operatorname{Ext}_\mathcal{A}^n(M^{\oplus a_0}, P')$ in~(\ref{long_exact-1}) 
   is an epimorphism$.$ Since $\operatorname{Ext}_\mathcal{A}^n(M, P') \neq 0$, we deduce that $\operatorname{Ext}_\mathcal{A}^n(N, P') \neq 0$. Hence, $\operatorname{Gpd}_{\mathcal{A}} N=n= \operatorname{Gpd}_{\mathcal{A}} M$, as desired.
	
    It remains to show $\operatorname{Ext}_\mathcal{A}^{n+j}(\operatorname{Im} d_0, P) = 0$ for all $j>0$ and all $P \in \mathcal{P(A)}$. 
     Applying \(\operatorname{Hom}_\mathcal{A}(-, P)\) to~(\ref{exact-2}), and using the fact that $\operatorname{Ext}_\mathcal{A}^{i}(M, P)=0$ for any $i>n$, we have
     $ \operatorname{Ext}_\mathcal{A}^{n+j}(\operatorname{Ker}d_{-1}, P) \cong \operatorname{Ext}_\mathcal{A}^{n+j}(\operatorname{Im}d_0, P).$
     Similarly, applying $\operatorname{Hom}_\mathcal{A}(-, P)$ to exact sequence~(\ref{exact-3}), we obtain 
      $\operatorname{Ext}_\mathcal{A}^{n+j}(\operatorname{Ker}d_{-1}, P) \cong \operatorname{Ext}_\mathcal{A}^{n+j+1}(\operatorname{Im}d_{-1}, P).$ 
      Consequently,
      we have a series of isomorphisms:
     \begin{center}
     	$\operatorname{Ext}_\mathcal{A}^{n+j}(\operatorname{Im}d_0, P) \cong \operatorname{Ext}_\mathcal{A}^{n+j+1}(\operatorname{Im}d_{-1}, P) \cong \cdots \cong \operatorname{Ext}_\mathcal{A}^{n+j+s}(\operatorname{Im}d_{-s}, P) =0,$
     \end{center}
      where the last equality follows from the vanishing of the morphism \(d_{-s}\colon P_{-s} \to 0\).
\end{proof}

For a complex $X_{\bullet}=(X_i, d_i^X)_{i\in \mathbb{Z}}$, we denote the good truncations of $X_\bullet$ by
$\tau_{\geqslant n}X_\bullet$ and $\tau_{\leqslant n}X_\bullet$, that is,
$$\tau_{\geqslant n}X_\bullet=\  \cdots\xrightarrow{d_{n+3}}X_{n+2}\xrightarrow{d_{n+2}}X_{n+1}\rightarrow\operatorname{Ker}d_n\rightarrow 0\rightarrow 0\rightarrow \cdots,$$
$$\tau_{\leqslant n}X_\bullet=\ \cdots\rightarrow 0 \rightarrow 0 \rightarrow \operatorname{Im}d_{n+1} \rightarrow X_n\xrightarrow{d_n} X_{n-1}\xrightarrow{d_{n-1}} \rightarrow \cdots.$$
The following lemma is inspired by \cite[Lemma~3.2]{CCL26}.
Here,  $\operatorname{Ext}^n_{\mathcal{G}}(M, N) = 0$ is the relative cohomology groups defined in \cite{AM02} via a proper Gorenstein projective resolution of $M$.

\begin{lemma}\label{ext}
	Let $M$ be an object of $\mathcal{A}$ admitting a proper Gorenstein projective resolution, and let $k \in \mathbb{N}.$
	Suppose that 
	 $\operatorname{Ext}^n_{\mathcal{G}}(M, M) = 0$ for $2 \leqslant n \leqslant k + 1$, and that $X_\bullet \in \mathcal{C}^{b}(\mathcal{A})$ is such that  $\operatorname{H}_i(X_\bullet) \in \operatorname{add}(M)$ for  any $0 \leqslant i \leqslant k$, and $\operatorname{H}_i(X_\bullet) = 0$ for any $i > k$ or $i < 0.$ If the induced morphism 
	$l_i^{X_\bullet} \colon \operatorname{H}_i(\operatorname{Hom}_\mathcal{A}(G, X_\bullet)) \to \operatorname{Hom}_\mathcal{A}(G, \operatorname{H}_i(X_\bullet))$ is an isomorphism
	for any \(i \in \mathbb{Z}\) and any $G \in \mathcal{GP(A)}$, then $X_\bullet \cong \bigoplus_{i=0}^k \operatorname{H}_i(X_\bullet)[i]$ in $\mathcal D_{\mathrm{gp}}^b(\mathcal{A})$.
\end{lemma}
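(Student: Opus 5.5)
Induction on $k$, following \cite[Lemma~3.2]{CCL26} but replacing the derived category by $\mathcal D_{\mathrm{gp}}^b(\mathcal{A})$, quasi-isomorphisms by $\mathcal{GP}$-quasi-isomorphisms, and ordinary Ext by $\operatorname{Ext}_{\mathcal G}$. For $k=0$, the only nonzero homology is $\operatorname{H}_0(X_\bullet)$. The natural maps
\[
\tau_{\geqslant 0}X_\bullet\to X_\bullet,\qquad \tau_{\geqslant 0}X_\bullet\to \operatorname{H}_0(X_\bullet)
\]
are then $\mathcal{GP}$-quasi-isomorphisms. Indeed, applying $\operatorname{Hom}_\mathcal{A}(G,-)$ and using the isomorphisms $l_i^{X_\bullet}$ together with Lemma~\ref{naturally} (which gives surjectivity of $\operatorname{Hom}(G,\pi_i)$ and $\operatorname{Hom}(G,\alpha_i)$), one checks that $\operatorname{Hom}_\mathcal{A}(G,X_\bullet)$ has homology $\operatorname{Hom}_\mathcal{A}(G,\operatorname{H}_0X_\bullet)$ concentrated in degree $0$. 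One also checks that $\operatorname{Hom}(G,\tau_{\geqslant 0}X_\bullet)=\tau_{\geqslant0}\operatorname{Hom}(G,X_\bullet)$, because $\operatorname{Hom}(G,-)$ is left exact and so preserves kernels. Hence $X_\bullet\cong \operatorname{H}_0(X_\bullet)$ in $\mathcal D_{\mathrm{gp}}^b(\mathcal{A})$.

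For the inductive step, consider the good truncation triangle
\[
\tau_{\geqslant k}X_\bullet\to X_\bullet\to \tau_{\leqslant k-1}X_\bullet\to (\tau_{\geqslant k}X_\bullet)[1].
\]
This is a genuine triangle in $\mathcal D_{\mathrm{gp}}^b(\mathcal{A})$: the short exact sequence of complexes underlying it is degreewise $\operatorname{Hom}(G,-)$-exact because of the surjectivity of $\operatorname{Hom}(G,\alpha_{k})$ supplied by Lemma~\ref{naturally}, so the cone comparison map is a $\mathcal{GP}$-quasi-isomorphism. Next I would verify that both truncations inherit the hypothesis that $l_i$ is an isomorphism, with homology in degrees $k$ and $0,\dots,k-1$ respectively. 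This follows by comparing $\operatorname{Hom}(G,\tau X_\bullet)$ with the truncations of the complex $\operatorname{Hom}(G,X_\bullet)$, again via Lemma~\ref{naturally}. By the $k=0$ case (shifted), $\tau_{\geqslant k}X_\bullet\cong \operatorname{H}_k(X_\bullet)[k]$, and by induction $\tau_{\leqslant k-1}X_\bullet\cong\bigoplus_{i=0}^{k-1}\operatorname{H}_i(X_\bullet)[i]$, using that the Ext-vanishing range for $k-1$ is contained in that for $k$.

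It then remains to show that the connecting morphism
\[
\bigoplus_{i=0}^{k-1}\operatorname{H}_i(X_\bullet)[i]\to \operatorname{H}_k(X_\bullet)[k+1]
\]
vanishes, so that the triangle splits. Each component lies in
\[
\operatorname{Hom}_{\mathcal D_{\mathrm{gp}}}(\operatorname{H}_i[i],\operatorname{H}_k[k+1])=\operatorname{Hom}_{\mathcal D_{\mathrm{gp}}}(\operatorname{H}_i,\operatorname{H}_k[k+1-i]).
\]
Here $k+1-i$ ranges over $2,\dots,k+1$. Invoking the identification $\operatorname{Hom}_{\mathcal D_{\mathrm{gp}}^b}(A,B[n])\cong\operatorname{Ext}^n_{\mathcal G}(A,B)$ for objects admitting proper Gorenstein projective resolutions (from \cite{GZ10}), and using additivity since $\operatorname{H}_i,\operatorname{H}_k\in\operatorname{add}M$, these groups are direct summands of finite sums of $\operatorname{Ext}^n_{\mathcal G}(M,M)$ with $2\leqslant n\leqslant k+1$, which vanish by hypothesis. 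A split triangle gives $X_\bullet\cong \operatorname{H}_k[k]\oplus\bigoplus_{i<k}\operatorname{H}_i[i]$.

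The main obstacle is the bookkeeping showing that the truncations are compatible with $\operatorname{Hom}(G,-)$. Concretely, one needs $\operatorname{H}_i(\operatorname{Hom}(G,\tau_{\leqslant k-1}X_\bullet))\cong\operatorname{Hom}(G,\operatorname{H}_i)$ in degree $k-1$, and the degreewise $\mathcal{GP}$-exactness of $0\to\tau_{\geqslant k}X\to X\to \tau_{\leqslant k-1}X\to0$. Note that with the paper's convention the quotient $\tau_{\leqslant k-1}$ involves $\operatorname{Im}d_k$ rather than $\operatorname{Ker}d_{k-1}$, so one must pick the correct truncation pair. The surjectivity of $\operatorname{Hom}(G,\alpha_k)$ from Lemma~\ref{naturally} should be exactly what makes this work.
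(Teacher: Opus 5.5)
Your proposal is correct and follows essentially the same route as the paper. Both argue by induction on $k$ via the truncation sequence $0\to\tau_{\geqslant k}X_\bullet\to X_\bullet\to\tau_{\leqslant k-1}X_\bullet\to 0$, which is $\operatorname{Hom}_\mathcal{A}(G,-)$-exact by Lemma~\ref{naturally}; both transfer the $l_i$-isomorphism hypothesis to $\tau_{\leqslant k-1}X_\bullet$, and both split the resulting triangle using $\operatorname{Ext}^n_{\mathcal G}(M,M)=0$ through \cite[Theorem 3.12]{GZ10}. Your version is slightly more complete: you justify the base case and the isomorphism $\tau_{\geqslant k}X_\bullet\cong \operatorname{H}_k(X_\bullet)[k]$ in $\mathcal D_{\mathrm{gp}}^b(\mathcal{A})$ by exhibiting explicit $\mathcal{GP}$-quasi-isomorphisms, which the paper leaves implicit.
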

\begin{proof}
	We prove this statement by induction on $k.$ If $k=0$ then the statement holds trivially$.$ Let $k > 0.$ For any $G \in \mathcal{GP(A)}$, it follows from Lemma~\ref{naturally} that $\operatorname{Hom}_\mathcal{A}(G, \alpha _i)\colon \operatorname{Hom}_\mathcal{A}(G, X_k)\to \operatorname{Hom}_\mathcal{A}(G, \operatorname{Im}d_k)$ is an epimorphism, where
	$\alpha _i\colon X_k \to \operatorname{Im}d_k$ is the canonical epimorphism.
	Therefore, the
	short exact sequence of complexes \begin{equation}\label{exact-tri-3}
		0 \to \tau_{\geqslant k}X_\bullet \to X_\bullet \to \tau_{\leqslant k-1}X_\bullet \to 0
	\end{equation}
	remains exact under the functor $\operatorname{Hom}_\mathcal{A}(G, -)$. Then
	we have a distinguished triangle in $\mathcal D_{\mathrm{gp}}^b(\mathcal{A})$:
	\begin{equation} \label{eq:truncation_triangle}
		\tau_{\geqslant k}X_\bullet \to X_\bullet \to \tau_{\leqslant k-1}X_\bullet \xrightarrow{f} (\tau_{\geqslant k}X_\bullet)[1],
	\end{equation}
	where $\tau_{\geqslant k}X_\bullet \cong \operatorname{H}_k(X_\bullet)[k]\in\operatorname{add}(M[k])$ and $\operatorname{H}_i(\tau_{\leqslant k-1}X_\bullet) \cong \operatorname{H}_i(X_\bullet)$ for $i \leqslant k-1$ but $\operatorname{H}_i(\tau_{\leqslant k-1}X_\bullet) = 0$ for $i \geqslant k.$ 
	In particular, $\operatorname{H}_i(\tau_{\leqslant {k-1}}X_\bullet) \in \operatorname{add}(M)$ for $0 \leqslant i \leqslant {k-1}.$ Now we claim that the induced morphism $$l_i^{\tau_{\leqslant {k-1}}X_\bullet} \colon \operatorname{H}_i(\operatorname{Hom}_\mathcal{A}(G, \tau_{\leqslant {k-1}}X_\bullet)) \to \operatorname{Hom}_\mathcal{A}(G, \operatorname{H}_i(\tau_{\leqslant {k-1}}X_\bullet))$$ is an isomorphism
	for any \(i \in \mathbb{Z}\) and any $G \in \mathcal{GP(A)}$, and then the induction can be applied to $\tau_{\leqslant {k-1}}X_\bullet$.
	
	Applying the functor $\operatorname{Hom}_\mathcal{A}(G, -)$ to (\ref{exact-tri-3}) yields a short exact sequence of complexes. Since the $i$-th term of $\tau_{\geqslant k}X_\bullet$ is zero for any $i \leqslant k-1$,
	 the induced long exact sequence in homology gives an isomorphism $$\operatorname{H}_i(\operatorname{Hom}_\mathcal{A}(G, X_\bullet)) \cong  \operatorname{H}_i(\operatorname{Hom}_\mathcal{A}(G, \tau_{\leqslant k-1} X_\bullet)).$$ Then for any $i \leqslant k-1$, 
	 we have the following commutative diagram
	\[
	\begin{tikzcd}
		\operatorname{H}_i(\operatorname{Hom}_\mathcal{A}(G, X_\bullet)) 
		\arrow[r, "\cong"] 
		\arrow[d, "l_i^{X_\bullet}"]
		& \operatorname{H}_i(\operatorname{Hom}_\mathcal{A}(G, \tau_{\leqslant k-1} X_\bullet)) 
		\arrow[d, "l_i^{\tau_{\leqslant k-1}X_\bullet}" ] \\
		\operatorname{Hom}_\mathcal{A}(G, \operatorname{H}_i(X_\bullet)) 
		\arrow[r, "\cong"]
		& \operatorname{Hom}_\mathcal{A}(G, \operatorname{H}_i(\tau_{\leqslant k-1}X_\bullet))
	\end{tikzcd}
	\]
	by Lemma~\ref{naturally}, where the second row is an isomorphism since $\operatorname{H}_i(X_\bullet) \cong \operatorname{H}_i(\tau_{\leqslant k-1}X_\bullet)  $. 
	As $l_i^{X_\bullet}$ is an isomorphism, we deduce that 
	 $l_i^{\tau_{\leqslant {k-1}}X_\bullet}$ is an isomorphism 
	for any $i \leqslant k-1$. For any $i > k$, 
	the morphism $l_i^{\tau_{\leqslant {k-1}}X_\bullet}$ is an isomorphism as the $i$-th term of $\tau_{\leqslant {k-1}}X_\bullet$ vanishes.
	For $i=k$, both $H_k(\tau_{\leqslant {k-1}}X_\bullet)$ and $\operatorname{H}_k(\operatorname{Hom}_\mathcal{A}(G, \tau_{\leqslant k-1} X_\bullet))$ are zero, because the inclusion $\operatorname{Im} d_{k} \hookrightarrow X_{k-1}$ is still a monomorphism upon application of $\operatorname{Hom}_\mathcal{A}(G,-)$. Therefore, $l_i^{\tau_{\leqslant {k-1}}X_\bullet} $ is an isomorphism
	for any \(i \in \mathbb{Z}\) and any $G \in \mathcal{GP(A)}$.
	
	Now, induction on $k-1$ implies that
	\begin{center}
		$\tau_{\leqslant k-1}X_\bullet \cong \bigoplus_{i=0}^{k-1} \operatorname{H}_i(\tau_{\leqslant k-1}X_\bullet)[i] \cong \bigoplus_{i=0}^{k-1} \operatorname{H}_i(X_\bullet)[i] \in \operatorname{add}\left(\bigoplus_{i=0}^{k-1} M[i]\right)$.
	\end{center}
	Since $\operatorname{Ext}^n_{\mathcal{G}}(M, M) = 0$ for $2 \leqslant n \leqslant k+1,$ it follows from \cite[Theorem 3.12]{GZ10} that $\operatorname{Hom}_{\mathcal D_{\mathrm{gp}}^b(\mathcal{A})}
	({\textstyle\bigoplus_{i=0}^{k-1}} M[i], M[k+1])=0$, which implies that
	$\operatorname{Hom}_{\mathcal D_{\mathrm{gp}}^b(\mathcal{A})}
	(\tau_{\leqslant k-1}X_\bullet, (\tau_{\geqslant k}X_\bullet)[1]) 
	= 0.$
Thus, $f=0$ and the triangle~(\ref{eq:truncation_triangle}) splits, which yields the isomorphism $X_\bullet \cong \tau_{\geqslant k}X_\bullet \bigoplus \tau_{\leqslant k-1}X_\bullet \cong \bigoplus_{i=0}^k \operatorname{H}_i(X_\bullet)[i]$ 
in $\mathcal D_{\mathrm{gp}}^b(\mathcal{A})$. 
\end{proof}

Motivated by \cite[Theorem~3.3]{CCL26}, we consider the following proposition.
\begin{proposition}\label{relation}
	Suppose that $M \in \mathcal{A}$ has a finite quasi-Gorenstein projective resolution $X_\bullet$. If $M$ has a proper Gorenstein projective resolution, and  $\operatorname{Ext}^n_{\mathcal{G}}(M, M) = 0$ for $2 \leqslant n \leqslant \operatorname{hsup}X_\bullet - \operatorname{hinf} X_\bullet + 1$, then $\operatorname{Gpd}_{\mathcal{A}}M < \infty$.
\end{proposition}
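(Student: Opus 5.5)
Write $u=\operatorname{hinf}X_\bullet$ and $v=\operatorname{hsup}X_\bullet$, and shift $X_\bullet$ so that $u=0$ and $k=v-u$. Then $X_\bullet$ is a bounded complex of Gorenstein projective objects with $\operatorname{H}_i(X_\bullet)\cong M^{\oplus a_i}\in\operatorname{add}(M)$ for $0\leqslant i\leqslant k$, and $\operatorname{H}_i(X_\bullet)=0$ otherwise. Condition (iii) of Definition~\ref{quasi} gives that $l_i^{X_\bullet}$ is an isomorphism for all $i$ and all $G\in\mathcal{GP(A)}$. The hypothesis $\operatorname{Ext}^n_{\mathcal{G}}(M,M)=0$ for $2\leqslant n\leqslant k+1$ is exactly what Lemma~\ref{ext} requires. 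Hence
\[
X_\bullet\cong \bigoplus_{i=0}^{k} M^{\oplus a_i}[i]\quad \text{in } \mathcal{D}^b_{\mathrm{gp}}(\mathcal{A}).
\]
Some $a_j\neq 0$, so $M[j]$ is a direct summand of $X_\bullet$ in $\mathcal{D}^b_{\mathrm{gp}}(\mathcal{A})$.

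Next I use that a bounded complex of Gorenstein projectives behaves like a "perfect" object in the Gorenstein derived category. By \cite{GZ10}, the composite $\mathcal{K}^b(\mathcal{GP})\to\mathcal{D}^b_{\mathrm{gp}}(\mathcal{A})$ is fully faithful; its essential image consists of the complexes admitting a bounded $\mathcal{GP}$-resolution. These are exactly the objects $Y$ such that $\operatorname{Hom}_{\mathcal{D}^b_{\mathrm{gp}}}(Y,Z[n])=0$ for $n\gg0$ and all objects $Z$, or equivalently those built from finitely many objects of $\mathcal{GP}$ by cones. This image $\mathcal{K}^b(\mathcal{GP})$ is a thick subcategory, i.e.\ closed under direct summands; I would check this via idempotent completeness of $\mathcal{K}^b(\mathcal{GP})$, or more simply avoid it as follows. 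Since $X_\bullet$ is a bounded complex of Gorenstein projectives, \cite[Theorem 3.12]{GZ10} gives, for any object $N$,
\[
\operatorname{Ext}^n_{\mathcal{G}}(M,N)\cong\operatorname{Hom}_{\mathcal{D}^b_{\mathrm{gp}}}(M,N[n]).
\]
This group is a direct summand of $\operatorname{Hom}_{\mathcal{D}^b_{\mathrm{gp}}}(X_\bullet,N[n+j])$, which I will show vanishes for $n+j>\sup X_\bullet$.

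The key step is this vanishing. For a bounded complex $X_\bullet$ of Gorenstein projectives, morphisms in $\mathcal{D}^b_{\mathrm{gp}}$ out of $X_\bullet$ are computed in $\mathcal{K}$: the map $\operatorname{Hom}_{\mathcal{K}}(X_\bullet,Y)\to\operatorname{Hom}_{\mathcal{D}_{\mathrm{gp}}}(X_\bullet,Y)$ is bijective because $\operatorname{Hom}_{\mathcal{K}}(X_\bullet,Z)=0$ for every $\mathcal{GP}$-acyclic $Z$. I prove this by induction on the length of $X_\bullet$ using brutal truncations, which reduces to the case of a single $G\in\mathcal{GP}$ in one degree, where it is the definition of proper exactness. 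Then $\operatorname{Hom}_{\mathcal{K}}(X_\bullet,N[m])=0$ once $m>\sup X_\bullet$, since $X_\bullet$ has no term in degree $m$. Hence $\operatorname{Ext}^n_{\mathcal{G}}(M,N)=0$ for all $N$ and all $n\gg0$.

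To conclude, let $G_\bullet\to M$ be the proper Gorenstein projective resolution and take $n$ large with syzygy $K_n=\operatorname{Im}(G_n\to G_{n-1})$. The vanishing of $\operatorname{Ext}^{n+1}_{\mathcal{G}}(M,K_{n+1})$ means the proper short exact sequence $0\to K_{n+1}\to G_n\to K_n\to 0$ splits: the class of $G_{n+1}\to K_{n+1}$ extends to $G_n$, so $K_{n+1}\to G_n$ is a split mono. Therefore $K_n$ is a direct summand of $G_n$, hence Gorenstein projective, and $\operatorname{Gpd}_{\mathcal{A}}M\leqslant n<\infty$. I expect the main obstacle to be the key step: justifying rigorously that Hom out of a bounded $\mathcal{GP}$-complex in $\mathcal{D}_{\mathrm{gp}}$ is computed in the homotopy category, and correctly identifying $\operatorname{Hom}_{\mathcal{D}^b_{\mathrm{gp}}}(M,N[n])$ with $\operatorname{Ext}^n_{\mathcal{G}}$ via \cite{GZ10}.
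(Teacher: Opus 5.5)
Your proposal is correct. Its first half, shifting so that $\operatorname{hinf}X_\bullet=0$ and applying Lemma~\ref{ext}, is exactly what the paper does; after that the two routes diverge.

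\textbf{The paper's route.} The paper replaces $M$ by its proper Gorenstein projective resolution $G_\bullet$. It uses full faithfulness of $\mathcal{K}^-(\mathcal{GP})\to\mathcal{D}^-_{\mathrm{gp}}(\mathcal{A})$ \cite[Proposition 2.8]{GZ10} to lift $X_\bullet\cong\bigoplus_j G_\bullet^{\oplus a_j}[j]$ to an isomorphism in $\mathcal{K}^-(\mathcal{GP})$. It then invokes closure of $\mathcal{K}^b(\mathcal{GP})$ under direct summands in $\mathcal{K}^-(\mathcal{GP})$ to get $G_\bullet\in\mathcal{K}^b(\mathcal{GP})$. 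Finiteness of $\operatorname{Gpd}_{\mathcal{A}}M$ is then asserted without further comment.

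\textbf{Your route.} You stay in $\mathcal{D}^b_{\mathrm{gp}}(\mathcal{A})$ and combine two ingredients:
\begin{itemize}
\item left orthogonality of a bounded complex of Gorenstein projectives to $\mathcal{GP}$-acyclic complexes, by finite induction on brutal truncations with proper exactness as the base case;
\item the standard Verdier-quotient lemma for objects left orthogonal to the kernel.
\end{itemize}
Together these give $\operatorname{Hom}_{\mathcal{D}^b_{\mathrm{gp}}}(X_\bullet,N[m])=\operatorname{Hom}_{\mathcal{K}}(X_\bullet,N[m])=0$ for $m>\sup X_\bullet$. The summand $M[j]$ then kills $\operatorname{Ext}^n_{\mathcal{G}}(M,-)$ uniformly for large $n$. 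The splitting of $0\to K_{n+1}\to G_n\to K_n\to 0$ finishes the argument.

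\textbf{Comparison.} Your argument needs only the bounded orthogonality, not the $\mathcal{K}^-$ statement. It avoids the summand-closure claim in the homotopy category, which the paper uses without proof. It also makes explicit the passage from the homotopy-category statement to finite Gorenstein projective dimension, which the paper leaves implicit. Both arguments still need $\mathcal{GP}(\mathcal{A})$ to be closed under direct summands, and both need \cite[Theorem 3.12]{GZ10}. Taking $j=\operatorname{hsup}X_\bullet$, your argument even yields the quantitative bound $\operatorname{Gpd}_{\mathcal{A}}M\leqslant \sup X_\bullet-\operatorname{hsup}X_\bullet$. The paper's route is shorter once its cited facts are granted.

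Two small corrections. First, the identification $\operatorname{Ext}^n_{\mathcal{G}}(M,N)\cong\operatorname{Hom}_{\mathcal{D}^b_{\mathrm{gp}}}(M,N[n])$ holds because $M$ admits a proper Gorenstein projective resolution, not because $X_\bullet$ is bounded. Second, drop your side remarks about the essential image of $\mathcal{K}^b(\mathcal{GP})$ (its characterization by eventual Hom-vanishing, and idempotent completeness). They are unproved, and the argument you actually carry out never uses them.
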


\begin{proof}
	Let $t := \operatorname{hsup}X_\bullet$ and $s := \operatorname{hinf}X_\bullet.$ Then $t$ and $s$ are finite with $s \leqslant t.$ Since $X_\bullet$ is a finite quasi-Gorenstein projective resolution of $M$, we see that $X_\bullet \in \mathcal{C}^b(\mathcal{A})$, $\operatorname{H}_i(X_\bullet) \cong M^{\oplus a_i}$ for any $i\in [s,t]$ and $\operatorname{H}_i(X_\bullet) = 0$ for any $i > t$ or $i < s$, where $a_i$ are nonnegative integers and not all zero. Let $k=t-s$.
	Then $\operatorname{H}_i(X_\bullet[-s]) = \operatorname{H}_{i+s}(X_\bullet)=0$ for any $i > k$ or $i < 0$, and
	$\operatorname{H}_i(X_\bullet[-s]) \in \text{add}(M)$ for any $i\in[0,k]$.
	By Lemma~\ref{ext}, we have an isomorphism $X_\bullet[-s] \cong \bigoplus_{i=0}^k \operatorname{H}_i(X_\bullet[-s])[i]$ in $\mathcal D_{\mathrm{gp}}^-(\mathcal{A})$, which yields
	a series of isomorphisms
	$X_\bullet \cong \bigoplus_{i=0}^{k} \operatorname{H}_{i+s}(X_\bullet)[i+s] \cong \bigoplus_{j=s}^{t} \operatorname{H}_j(X_\bullet)[j]\cong\bigoplus_{j=s}^{t}M^{\oplus a_j}[j]$ in $\mathcal D_{\mathrm{gp}}^-(\mathcal{A}).$ 
	Now, Let $G_\bullet \rightarrow M \rightarrow 0$ be a proper Gorenstein projective resolution of $M.$ Then $M \cong G_\bullet$ in $\mathcal D_{\mathrm{gp}}^-(\mathcal{A}),$ and thus $X_\bullet\cong\bigoplus_{j=s}^{t}G_\bullet^{\oplus a_j}[j]$ in $\mathcal D_{\mathrm{gp}}^-(\mathcal{A}),$ where both sides belong to $\mathcal{K}^-(\mathcal{GP}).$ By \cite[Proposition 2.8]{GZ10}, 
	$\mathcal{K}^-(\mathcal{GP})$ is a triangulated subcategory of $\mathcal D_{\mathrm{gp}}^-(\mathcal{A})$, and it follows that
	$X_\bullet \cong \bigoplus_{j=s}^t G_\bullet^{\oplus a_j}[j]$ in $\mathcal{K}^-(\mathcal{GP}).$ Note that $\mathcal{K}^b(\mathcal{GP})$ is closed under direct summands in $\mathcal{K}^-(\mathcal{GP}).$ Since $X_\bullet \in \mathcal{K}^b(\mathcal{GP})$ and at least one \(a_i\) is not zero, we have $G_\bullet \in \mathcal{K}^b(\mathcal{GP}).$ This implies $\operatorname{Gpd}_\mathcal{A}M < \infty$.
\end{proof}

Proposition \ref{relation} implies the following result, which is the Gorenstein analogue of \cite[Corollary 3.4]{CCL26}.
\begin{corollary}\label{relation1}
	Let $M$ be an object of $\mathcal{A}$ admitting a proper Gorenstein projective resolution$.$ If $\operatorname{qGpd}_{\mathcal{A}}M< \infty$, and  $\operatorname{Ext}^n_{\mathcal{G}}(M, M) = 0$ for any $n\geqslant2$, then $\operatorname{Gpd}_{\mathcal{A}}M< \infty$.
\end{corollary}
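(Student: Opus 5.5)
The plan is to deduce Corollary~\ref{relation1} directly from Proposition~\ref{relation}, so the work is to produce a finite quasi-Gorenstein projective resolution satisfying its hypotheses.

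\emph{Step 1: choose a resolution.} Since $\operatorname{qGpd}_{\mathcal{A}}M<\infty$, the infimum in Definition~\ref{quasi}(2) is taken over a nonempty set. Hence $M$ admits a finite quasi-Gorenstein projective resolution $X_\bullet$. If $M=0$, then $\operatorname{Gpd}_{\mathcal{A}}M=0$ and there is nothing to prove, so we may assume $M\neq 0$.

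\emph{Step 2: the numerical invariants are finite.} Because $X_\bullet$ is bounded, both $\operatorname{hsup}X_\bullet$ and $\operatorname{hinf}X_\bullet$ are finite. Condition (ii) of Definition~\ref{quasi} gives some $a_j\neq 0$, so with $M\neq0$ some homology $\operatorname{H}_j(X_\bullet)\cong M^{\oplus a_j}$ is nonzero. This gives
\[
\operatorname{hinf}X_\bullet\leqslant \operatorname{hsup}X_\bullet,
\]
and therefore the quantity $k+1:=\operatorname{hsup}X_\bullet-\operatorname{hinf}X_\bullet+1$ is a positive integer.

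\emph{Step 3: verify the Ext hypothesis.} By assumption $\operatorname{Ext}^n_{\mathcal{G}}(M,M)=0$ for every $n\geqslant 2$. In particular this vanishing holds on the finite range $2\leqslant n\leqslant k+1$ that Proposition~\ref{relation} requires.

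\emph{Step 4: apply Proposition~\ref{relation}.} All hypotheses of Proposition~\ref{relation} are now met:
\begin{itemize}
\item $M$ has a proper Gorenstein projective resolution, by assumption;
\item $X_\bullet$ is a finite quasi-Gorenstein projective resolution of $M$, by Step~1;
\item the required Ext groups vanish, by Step~3.
\end{itemize}
Proposition~\ref{relation} therefore yields $\operatorname{Gpd}_{\mathcal{A}}M<\infty$.

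The argument has no real obstacle. The only subtle point is the degenerate case $M=0$, where $\operatorname{hinf}$ and $\operatorname{hsup}$ are not meaningful; Step~1 disposes of it separately.
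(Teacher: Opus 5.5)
Your proposal is correct and follows the paper's route: the paper states the corollary as an immediate consequence of Proposition~\ref{relation}, and your argument is exactly that. Finiteness of $\operatorname{qGpd}_{\mathcal{A}}M$ gives a finite quasi-Gorenstein projective resolution, and the vanishing of $\operatorname{Ext}^n_{\mathcal{G}}(M,M)$ for all $n\geqslant 2$ covers the finite range $2\leqslant n\leqslant \operatorname{hsup}X_\bullet-\operatorname{hinf}X_\bullet+1$ required there. One small ordering point: $\operatorname{qGpd}_{\mathcal{A}}0=0$ holds by convention rather than via a resolution, so you should set aside the trivial case $M=0$ before, not after, inferring that some $X_\bullet$ exists.
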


Inspired by \cite[Theorem~1.2 (4)]{CCL26}, we consider the following proposition, which provides a method for computing the quasi-Gorenstein projective dimension.
	
\begin{proposition}\label{calculate}
	 Let $\cdots\rightarrow G_{n}\xrightarrow{d_n} G_{n-1}\rightarrow \cdots \rightarrow G_1 \xrightarrow{d_1} G_0\xrightarrow{d_0} M \rightarrow 0$ be a proper Gorenstein projective resolution of $M.$ Suppose that there is an integer $n \geqslant 2$ and a chain map:
       \[
		\begin{tikzcd}
			\cdots \arrow[r] 
			& G_{n+1} \arrow[r, "d_{n+1}"] \arrow[d, "g_{n+1}"] 
			& G_n \arrow[r, "d_{n}"] \arrow[d, "g_n"] 
			& G_{n-1} \arrow[r] \arrow[d, "0"] 
			& \cdots \arrow[r, "d_{1}"] 
			& G_0 \arrow[r] \arrow[d, "0"] 
			& 0 \\
			\cdots \arrow[r] 
			& G_1 \arrow[r, "d_{1}"] 
			& G_0 \arrow[r] 
			& 0 \arrow[r] 
			& \cdots \arrow[r] 
			& 0 \arrow[r] 
			& 0
		\end{tikzcd}
		\]
		inducing an isomorphism $\operatorname{Im}d_{n+m} \cong \operatorname{Im}d_{m}$ for some integer $m \geqslant 0$. Then $\operatorname{qGpd}_{\mathcal{A}} M \leqslant m$.
\end{proposition}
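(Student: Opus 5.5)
The plan is to adapt the construction behind \cite[Theorem 1.2 (4)]{CCL26} and take a mapping cone of the given chain map. Let $G_\bullet$ be the deleted resolution $\cdots\to G_1\to G_0\to 0$, and let $g_\bullet\colon G_\bullet\to G_\bullet[n]$ be the given chain map, with $g_i=0$ for $i<n$. Truncate at level $n+m$. Let $T_\bullet$ be the brutal truncation $0\to G_{n+m}\to\cdots\to G_0\to 0$, or a suitable modification of it. The goal is a bounded complex $X_\bullet$ of Gorenstein projectives, built as the cone of the restricted map $T_\bullet\to T'_\bullet[n]$, where $T'_\bullet=(0\to G_m\to\cdots\to G_0\to 0)$.

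Concretely, take $X_\bullet=\operatorname{Cone}(g)$ with terms $X_i=G_{i-1}\oplus G_{i-n}$, restricted to the range $0\le i\le n+m+1$. The top pieces are adjusted so that the induced isomorphism $\operatorname{Im}d_{n+m}\cong\operatorname{Im}d_m$ glues the two truncations.

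The homology of $X_\bullet$ is computed from the long exact sequence of the cone. Away from the two ends the pieces are exact, since the resolution is exact in positive degrees. In low degrees the homology is $\operatorname{H}_0(G_\bullet)=M$. Near degree $n$ it is another copy of $M$, namely $M$ shifted by $n$. Because $n\ge 2$ these two copies do not interfere; this is where the hypothesis $n\geqslant2$ is used, as in \cite{CCL26}. At the top end, the isomorphism $\operatorname{Im}d_{n+m}\cong\operatorname{Im}d_m$ identifies the relevant kernel with a boundary, so no additional homology appears there. This should give $\sup X_\bullet-\operatorname{hsup}X_\bullet\le m$ and $\operatorname{H}_i(X_\bullet)\in\{0,M\}$, with some $\operatorname{H}_i$ nonzero. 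Conditions (i) and (ii) of Definition~\ref{quasi} then hold.

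The main obstacle is condition (iii), the natural isomorphism $l_i$. For this, apply $\operatorname{Hom}_{\mathcal A}(G,-)$ with $G\in\mathcal{GP(A)}$ throughout. Properness of the resolution means that $\operatorname{Hom}(G,G_\bullet)\to\operatorname{Hom}(G,M)\to 0$ is exact. Hence the complex $\operatorname{Hom}(G,X_\bullet)$ is again the cone of $\operatorname{Hom}(G,g_\bullet)$, and it has the same shape as before: it is built from the complex $\operatorname{Hom}(G,G_\bullet)$, whose homology is $\operatorname{Hom}(G,M)$ in degree $0$. The same homology computation, carried out after applying $\operatorname{Hom}(G,-)$, gives $\operatorname{H}_i(\operatorname{Hom}(G,X_\bullet))\cong\operatorname{Hom}(G,\operatorname{H}_i(X_\bullet))$.

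To see that this isomorphism is the map $l_i$, compare the two long exact sequences via Lemma~\ref{commutate} and then apply the five lemma or the Snake Lemma, as in the proof of Proposition~\ref{short exact sequence 1}. At the glued top end, one must check that $\operatorname{Hom}(G,-)$ preserves the short exact sequences $0\to\operatorname{Im}d_{j+1}\to G_j\to\operatorname{Im}d_j\to0$. This holds by properness, so the gluing isomorphism passes through $\operatorname{Hom}(G,-)$ as well.

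Combining these steps, $X_\bullet$ is a finite quasi-Gorenstein projective resolution of $M$, and therefore $\operatorname{qGpd}_{\mathcal A}M\le m$.
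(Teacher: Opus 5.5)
Your overall plan matches the paper's: take the mapping cone of a truncation of $g_\bullet$, read off its homology from the cone's long exact sequence (with $n\geqslant 2$ keeping the two copies of $M$ apart), and get condition (iii) from properness. But the one step that actually uses the hypothesis $\operatorname{Im}d_{n+m}\cong\operatorname{Im}d_m$ is deferred to ``a suitable modification'' of the top pieces. Worse, the construction you do write down is off by one.

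Your $T_\bullet$, $T'_\bullet$ and the range $0\leqslant i\leqslant n+m+1$ describe the cone of $G_\bullet^{\leqslant n+m}\to G_\bullet^{\leqslant m}[n]$. The top term of that cone is $G_{n+m}$, sitting in degree $n+m+1$. Its top homology is $\operatorname{H}_{n+m+1}=\operatorname{Ker}d_{n+m}\cap\operatorname{Ker}g_{n+m}\cong\operatorname{Ker}g_{n+m}$; the inclusion $\operatorname{Ker}g_{n+m}\subseteq\operatorname{Ker}d_{n+m}$ comes from injectivity of $\operatorname{Im}d_{n+m}\to\operatorname{Im}d_m$. For $m\geqslant1$ the Snake Lemma also gives $\operatorname{H}_{n+m}\cong\operatorname{Coker}(\operatorname{Im}d_{n+m+1}\to\operatorname{Im}d_{m+1})\cong\operatorname{Coker}g_{n+m}$. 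The hypothesis says nothing about $g_{n+m}$, so condition (ii) fails in general. Even if these groups vanished, you would get $\sup X_\bullet-\operatorname{hsup}X_\bullet=(n+m+1)-n=m+1$, not $m$. The underlying issue is that the top homology of $G_\bullet^{\leqslant n+m}$ is $\operatorname{Im}d_{n+m+1}$, which the hypothesis does not control. So the given isomorphism has nothing to ``glue'' in your complex.

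The fix, which is what the paper does, is to truncate one step lower and use $g_\bullet^{\leqslant n+m-1}\colon G_\bullet^{\leqslant n+m-1}\to G_\bullet^{\leqslant m-1}[n]$. This cone lives in degrees $1,\dots,n+m$. The top homology of its source is $\operatorname{Ker}d_{n+m-1}=\operatorname{Im}d_{n+m}$, which is exactly what the hypothesis maps isomorphically onto $\operatorname{Im}d_m$. Concretely, for $m\geqslant 1$ this makes the top differential $G_{n+m-1}\to G_{n+m-2}\oplus G_{m-1}$ injective.

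The paper organizes this through the octahedral axiom in $\mathcal D_{\mathrm{gp}}^b(\mathcal{A})$. It compares the triangles $\operatorname{Im}d_{n+m}[n+m-1]\to G_\bullet^{\leqslant n+m-1}\to M\to\operatorname{Im}d_{n+m}[n+m]$ and $\operatorname{Im}d_{m}[n+m-1]\to G_\bullet^{\leqslant m-1}[n]\to M[n]\to\operatorname{Im}d_{m}[n+m]$, whose first terms are identified by the given isomorphism. This yields $\operatorname{Cone}(g_\bullet^{\leqslant n+m-1})\cong\operatorname{Cone}(M\to M[n])$, with homology $M$ in degrees $1$ and $n$ only, so $\sup-\operatorname{hsup}=m$.

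With this corrected complex, your argument for (iii) goes through as in the paper. Properness makes $\operatorname{Hom}_{\mathcal{A}}(G,-)$ exact on the sequences $0\to\operatorname{Im}d_{j+1}\to G_j\to\operatorname{Im}d_j\to0$, so the same computation gives $\operatorname{Hom}_{\mathcal{A}}(G,M)$ in degrees $1$ and $n$. Lemma~\ref{commutate} then lets you identify the comparison map with $l_i$.
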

\begin{proof}
	Let $G_\bullet$ be the deleted complex $$\cdots\xrightarrow{d_{n+1}} G_{n}\xrightarrow{d_n} G_{n-1}\xrightarrow{d_{n-1}} \cdots \xrightarrow{d_2} G_1 \xrightarrow{d_1} G_0 \rightarrow 0.$$
	 For each $s\geqslant 0$, we denote by $G_{\bullet}^{\leqslant s}$ the brutal truncation at degree $s$, that is,
\begin{center}
		$G_i^{\leqslant s} = \begin{cases} 
		G_i & \text{if } i \le s; \\ 
		0 & \text{if } i > s. 
	\end{cases}$
\end{center}
Using a method similar to that in \cite[Theorem~1.2 (4)]{CCL26}, we have the following commutative diagram
	\[
	\begin{tikzcd}[column sep=8pt]
		\operatorname{Im}d_{n+m}[n+m-1] 
		\arrow[r] 
		\arrow[d, "\cong"] 
		& G_{\bullet}^{\leqslant n+m-1} 
		\arrow[r] 
		\arrow[d, "g_{\bullet}^{\leqslant n+m-1}"] 
		& M 
		\arrow[r] 
		\arrow[d] 
		& \operatorname{Im}d_{n+m}[n+m] 
		\arrow[d, "\cong"] \\
		\operatorname{Im}d_m[n+m-1] 
		\arrow[r] 
		& G_{\bullet}^{\leqslant m-1}[n] 
		\arrow[r] 
		\arrow[d] 
		& M[n] 
		\arrow[r] 
		\arrow[d] 
		& \operatorname{Im}d_m[n+m] \\
		& \operatorname{Cone}(g_{\bullet}^{\leqslant n+m-1}) 
		\arrow[r, equals] 
		\arrow[d] 
		& \operatorname{Cone}(g_{\bullet}^{\leqslant n+m-1}) 
		\arrow[d] \\
		& G_{\bullet}^{\leqslant n+m-1}[1] 
		\arrow[r] 
		& M[1],
	\end{tikzcd}
	\]
	 where all rows and columns are distinguished triangles
	 in $\mathcal D_{\mathrm{gp}}^b(\mathcal{A})$. Indeed, $M$ is $\mathcal {GP}$-quasi-isomorphic to the complex $$0 \rightarrow \operatorname{Im}d_{n+m}\rightarrow G_{n+m-1}\rightarrow \cdots \rightarrow G_1 \xrightarrow{d_1} G_0\rightarrow 0.$$ Therefore, the first row is a distinguished triangle
	 in $\mathcal D_{\mathrm{gp}}^b(\mathcal{A})$, and so is the second row. Moreover, the second column is a distinguished triangle
	 in $\mathcal K^b(\mathcal{A})$, which induces a distinguished triangle
	 in $\mathcal D_{\mathrm{gp}}^b(\mathcal{A})$, and the third column is constructed by the Octahedral axiom.  
	 
	 Since $M\rightarrow M[n] \rightarrow \operatorname{Cone}(g_{\bullet}^{\leqslant n+m-1}) \rightarrow M[1]$ is $\mathcal {GP}$-quasi-isomorphic (which is also quasi-isomorphic) to a distinguished triangle given by a mapping cone,
	 we have a long exact sequence of homologies
	  $$\cdots \rightarrow \operatorname{H}_i(M)\rightarrow \operatorname{H}_i(M[n]) \rightarrow \operatorname{H}_i(\operatorname{Cone}(g_{\bullet}^{\leqslant n+m-1})) \rightarrow \operatorname{H}_i(M[1])\rightarrow \cdots.$$ 
	 Since $n\geqslant 2$, we get \[
	\operatorname{H}_i(\operatorname{Cone}(g_\bullet^{\leqslant n+m-1})) \cong \begin{cases} 
		M, & i = 1 \text{ or } n, \\ 
		0, & \text{otherwise}. 
	\end{cases}
	\]
	 We now prove that the induced morphism $$l_i^{\operatorname{Cone}(g_\bullet^{\leqslant n+m-1})} \colon \operatorname{H}_i(\operatorname{Hom}_\mathcal{A}(G, \operatorname{Cone}(g_\bullet^{\leqslant n+m-1}))) \to \operatorname{Hom}_\mathcal{A}(G, \operatorname{H}_i(\operatorname{Cone}(g_\bullet^{\leqslant n+m-1})))$$ is an isomorphism
	 for any \(i \in \mathbb{Z}\) and any $G \in \mathcal{GP(A)}$, and then 
	 $\operatorname{Cone}(g_{\bullet}^{\leqslant n+m-1})$ is a quasi-Gorenstein projective resolution of $M.$ 
	 
	As $M\rightarrow M[n] \rightarrow \operatorname{Cone}(g_{\bullet}^{\leqslant n+m-1}) \rightarrow M[1]$ is $\mathcal {GP}$-quasi-isomorphic to a distinguished triangle given by a mapping cone, which is involved in a split exact sequence of complexes,  
	 we have a long exact sequence of homologies 
	 	$$\operatorname{H}_i(\operatorname{Hom}_\mathcal{A}(G,M[n])) \rightarrow \operatorname{H}_i(\operatorname{Hom}_\mathcal{A}(G,\operatorname{Cone}(g_{\bullet}^{\leqslant n+m-1}))) \rightarrow \operatorname{H}_{i}(\operatorname{Hom}_\mathcal{A}(G,M[1])).$$
	 	Therefore, we get  \[
	 	\operatorname{H}_i(\operatorname{Hom}_\mathcal{A}(G,\operatorname{Cone}(g_{\bullet}^{\leqslant n+m-1}))) \cong \begin{cases} 
	 		\operatorname{Hom}_\mathcal{A}(G,M), & i = 1 \text{ or } n, \\ 
	 		0, & \text{otherwise}. 
	 	\end{cases}
	 	\]
	 This shows that $l_i^{\operatorname{Cone}(g_\bullet^{\leqslant n+m-1})}$ is an isomorphism for any \(i \in \mathbb{Z}.\) Therefore, $\operatorname{Cone}(g_\bullet^{\leqslant n+m-1})$ is a finite quasi-Gorenstein projective resolution of $M.$ Consequently,
		$\operatorname{qGpd}_\mathcal{A}M \leqslant \sup(\operatorname{Cone}(g_\bullet^{\leqslant n+m-1})) - \operatorname{hsup}(\operatorname{Cone}(g_\bullet^{\leqslant n+m-1})) = n+m-n = m$.
\end{proof}

Next, we provide two examples to illustrate the computation of quasi-Gorenstein projective dimensions. The first example is due to Chen et. al \cite[Example 3.7]{CCL26}, which shows that there exists
a module $M$ with $\operatorname{Gpd}_{\mathcal{A}}M=\infty$ but $\operatorname{qGpd}_{\mathcal{A}}M=0.$ Thus, the inequality  $\operatorname{qGpd}_{\mathcal{A}}M \leqslant \operatorname{Gpd}_{\mathcal{A}}M$ can be strict.

\begin{example}\label{quiver1}
	\textnormal{Let $A$ be an algebra over a field $k$ presented by the quiver}
	\[\xymatrix{
		1 \ar[r]^{\alpha} & 2 \ar@(ur,dr)[]^{\beta}
	}\]
	{\rm with relations: \(\beta \alpha = 0 = \beta^2\). 
		Then it follows from \cite[Example 3.7]{CCL26} that $\operatorname{qpd}_A(S_1)= \infty$
		and $\operatorname{qpd}_A(S_2)= 0$.
Since $A$ is a monomial algebra,
	it follows from {\rm\cite[Theorem 4.1]{CSZ18}} that $\mathcal{GP}(A)=\mathcal{P}(A)$, and thus the quasi-Gorenstein projective dimension coincides with the quasi-projective dimension. Therefore,
	we have $\operatorname{qGpd}_A(S_1)= \infty$
	and $\operatorname{qGpd}_A(S_2)= 0$. However,  it is easy to see that
 $\operatorname{Gpd}_A(S_i)=\operatorname{pd}_A(S_i)=\infty$, for any $i=1$ or $i=2$.}
\end{example}

Next, we consider an algebra $A$ such that $\mathcal{GP}(A) \neq\mathcal{P}(A)$, and compute the quasi-Gorenstein projective dimension using our established results, rather than those of \cite{CCL26}. 

\begin{example}\label{quiver2}
{\rm	Let $A$ be an algebra over a field $k$ presented by the quiver
	\[
	\xymatrix{
		1\ar@(ul,dl)[]_{\alpha} \ar[r]^{\beta} & 2 \ar@(ur,dr)[]^{\gamma}
	}
	\]
	with relations: $ \alpha^2 =0= \gamma^2 = \gamma\beta.$ By \cite{CSZ18}, the only indecomposable non-projective Gorenstein projective $A$-module is $M=A\alpha.$ Then it follows from Theorem~\ref{dim} that $\operatorname{qGpd}_A(M) =\operatorname{Gpd}_A(M) = 0.$
	Moreover, it can be checked that
\[
\begin{tikzcd}[column sep=0.95cm]
	\cdots \arrow[r, "{\cdot \gamma}"] 
	& P_2 \arrow[r, "{\cdot \gamma}"] 
	& P_2 \arrow[r, "{\cdot \gamma}"] 
	& P_2 \arrow[r, "{\binom{\cdot \gamma}{0}}"] 
	& P_2 \oplus M \arrow[r, "{(\cdot \beta, l_M)}"] 
	& P_1 \arrow[r] 
	& S_1 \arrow[r] 
	& 0
\end{tikzcd}
\]
	is a proper Gorenstein projective resolution of $S_1$,
	where $l_M$ is the composition of $M\hookrightarrow \operatorname{rad}P_1$ and $ \operatorname{rad}P_1 \hookrightarrow P_1.$ This implies $\operatorname{Gpd}_AS_1= \infty$
	and $\operatorname{Ext}^n_{\mathcal{G}}(S_1, S_1) = 0$ for any $n\geqslant2.$ Then  $\operatorname{qGpd}_{A}S_1= \infty$ by Corollary~\ref{relation1}. 
	Also, there is a proper Gorenstein projective resolution of $S_2$ as follow:
\[
\begin{tikzcd}[column sep=0.95cm]
	P_\bullet = \cdots \arrow[r, "{\cdot \gamma}"] 
	& P_2 \arrow[r, "{\cdot \gamma}"] 
	& P_2 \arrow[r, "{\cdot \gamma}"] 
	& P_2 \arrow[r] 
	& S_2 \arrow[r] 
	& 0
\end{tikzcd}
\]
and thus $\operatorname{qGpd}_A(S_2) = 0$ by {\rm Theorem~\ref{basis} (4)}.}
\end{example}

\noindent {\footnotesize {\bf ACKNOWLEDGMENT.}
This work is supported by
the National Natural Science Foundation of China (12561008),
the project of Young and Middle-aged Academic and Technological leader of Yunnan
(202305AC160005) and the Basic Research Program of Yunnan Province (202301AT070070).}

\end{document}